\theoremstyle{plain}
\newtheorem{theorem}{Theorem}[section]
\newtheorem{lemma}[theorem]{Lemma}
\newtheorem{corollary}[theorem]{Corollary}
\newcommand{\MainTheoremName}{Main Theorem}
\theoremstyle{definition}
\newtheorem{definition}[theorem]{Definition}
\newtheorem{example}[theorem]{Example}
\theoremstyle{remark}
\newtheorem*{remark}{Remark}
\newtheorem{question}[theorem]{Question}
\numberwithin{equation}{section}
\DeclareMathOperator{\upset}{\uparrow\!}
\DeclareMathOperator{\downset}{\downarrow\!}
\DeclareMathOperator{\ran}{ran}
\DeclareMathOperator{\cf}{cf}
\DeclareMathOperator{\Clop}{Clop}
\DeclareMathOperator{\Hull}{Hull}
\newcommand{\nbd}{\nobreakdash}
\newcommand{\powset}[1]{\mathcal{P}(#1)}
\newcommand{\card}[1]{\left\lvert #1\right\rvert}
\newcommand{\la}{\langle}
\newcommand{\ra}{\rangle}
\newcommand{\inv}[1]{#1^{-1}}
\newcommand{\closure}[1]{\overline{#1}}
\newcommand{\restrict}{\upharpoonright}
\newcommand{\weight}[1]{w(#1)}
\newcommand{\character}[1]{\chi(#1)}
\newcommand{\wma}{we may assume}
\newcommand{\Wma}{We may assume}
\newcommand{\elemsub}{\prec}
\newcommand{\supp}[1]{\mathrm{supp}(#1)}
\newcommand{\mcA}{\mathcal{ A}}
\newcommand{\mcB}{\mathcal{ B}}
\newcommand{\mcD}{\mathcal{ D}}
\newcommand{\mcE}{\mathcal{ E}}
\newcommand{\mcF}{\mathcal{ F}}
\newcommand{\mcG}{\mathcal{ G}}
\newcommand{\mcT}{\mathcal{ T}}
\newcommand{\mcU}{\mathcal{ U}}
\newcommand{\om}{\omega}
\newcommand{\al}{\aleph}
\newcommand{\ka}{\kappa}
\newcommand{\lm}{\lambda}
\newcommand{\ie}{{\it i.e.},}
\newcommand{\eg}{{\it e.g.},}
\newcommand{\locsplit}[3][.]{\mathrm{split}_{#2}^{\ifx#1.{}\else{#1}\fi}(#3)}
\newcommand{\alo}{\al_0}
\newcommand{\bigh}{H(\theta)}
\newcommand{\ult}[1]{\mathrm{Ult}\left(#1\right)}
\newcommand{\scepin}{\v S\v cepin}
\newcommand{\lkfn}{$(\lm,\ka)$\nbd-FN}
\newcommand{\lkog}{$(\lm,\ka)$\nbd-openly generated}
\newcommand{\laog}{$(\lm,\alo)$\nbd-openly generated}
\newcommand{\lka}{$(\lm,\ka)$\nbd-adic}
\newcommand{\lkb}{$(\lm,\ka)$\nbd-base}
\newcommand{\trileq}{\trianglelefteq}
\title{The $(\lm,\ka)$-Freese-Nation Property for boolean algebras and compacta}
\author{
David Milovich}
\address{
Department of Engineering, Mathematics and Physics \\
Texas A\&M International University \\
5201 University Blvd\\
Laredo, TX\\
78041 USA}
\email{david.milovich@tamiu.edu}
\subjclass[2000]{Primary: 03E04; 06E05; Secondary: 54B10, 54A25, 54C10}
\keywords{Freese-Nation Property, $\ka$-FN, openly generated, base properties, compact, dyadic, hyperspace, boolean algebra, $\pi$-character, caliber}
\begin{document}

\begin{abstract}
We study a two-parameter generalization 
of the Freese-Nation Property of boolean algebras 
and its order-theoretic and topological consequences.

For every regular infinite $\ka$, 
the $(\ka,\ka)$-FN, the $(\ka^+,\ka)$-FN, and the $\ka$-FN 
are known to be equivalent;
we show that the family of properties $(\lm,\mu)$-FN 
for $\lm>\mu$ form a true two-dimensional hierarchy
that is robust with respect to coproducts, retracts,
and the exponential operation.

The $(\ka,\alo)$-FN in particular 
has strong consequences for base properties of compacta
(stronger still for homogeneous compacta), 
and these consequences have natural duals 
in terms of special subsets of boolean algebras.
We show that the $(\ka,\alo)$-FN also leads
to a generalization of the equality of weight
and $\pi$-character in dyadic compacta.

Elementary subalgebras and their duals, elementary quotient spaces, 
were originally used to define the \lkfn\  
and its topological dual, which naturally generalized from Stone spaces 
to all compacta, thereby generalizing \scepin's notion 
of openly generated compacta.  We introduce a simple combinatorial 
definition of the \lkfn\  that is equivalent to the original
for regular infinite cardinals $\lm>\ka$.
\end{abstract}

\maketitle

\section{Introduction}

Fuchino, Koppelberg, and Shelah~\cite{fks}\ introduced 
the $\ka$-Freese-Nation property, of $\ka$-FN, 
as a natural generalization of the classical Freese-Nation 
property (see Heindorf and Shapiro~\cite{hs}) of boolean algebras.

\begin{definition}\label{kfn}
Given a partial order $P$ and an infinite cardinal $\ka$, 
$P$ has the $\ka$-FN iff there is a map $f\colon P\rightarrow[P]^{<\ka}$
such that for all $p\leq q$ in $P$, 
$p\leq r\leq q$ for some $r\in f(p)\cap f(q)$.  
Such an $f$ is called a $\ka$-FN map.
\end{definition}

The original Freese-Nation property, or FN, is the $\alo$-FN 
(restricted to boolean algebras), 
although it has many other equivalent definitions.  
The $\al_1$-FN is the weak Freese-Nation 
property, or WFN, which also had been studied by Heindorf and Shapiro
(again, see \cite{hs}).
Fuchino, Koppelberg, and Shelah systematically studied the
$\ka$-FN, proving an elementary submodel characterization
and a game-theoretic characterization.
They also produced examples of boolean algebras
for which the WFN is present in some model
of set theory and absent in others.  In particular,
$\mathrm{ZFC}+\neg\mathrm{CH}$ does not decide
whether $\powset{\om}$ has the WFN.

\begin{definition}[\cite{fks}]\label{ksub}
We say a subalgebra $A$ of a boolean algebra $B$
is a $\ka$-subalgebra, and write $A\leq_\ka B$,
if for all $p\in B$, the set $A\cap\downset p$
(the set of all lower bounds of $p$ in $A$) 
has cofinality less than $\ka$ 
(that is, $A\cap\downset p=A\cap\downset E$ 
for some $E\in[A]^{<\ka}$)
and $A\cap\upset p$ has coinitiality less than $\ka$.
\end{definition}
The above definition naturally generalizes to
any category of (partially) ordered structures, including
the category of partial orders.

Given a boolean algebra $B$ and an elementary submodel 
$M$ of a structure $(\bigh,\in,\leq_B)$
where $\theta$ is a sufficiently large regular cardinal,
$B\cap M$ is an elementary subalgebra of $B$.
Fuchino, Koppelberg and Shelah showed that
for regular cardinals $\ka$, 
$B$ has the $\ka$-FN iff, for all $M$
as above with $\card{M}=\ka$ and $\ka\subseteq M$,
we have $B\cap M\leq_\ka B$.

In~\cite{hs}, Heindorf proved that a boolean algebra has
the FN iff it is openly generated (a property we need
not define here), and in~\cite{sc} and \cite{sc2}, 
\scepin\ studied the Stone dual of 
open generation and generalized it from the Stone
spaces to all compacta (\ie\ compact Hausdorff spaces).
\scepin\ proved that the k-adic compacta, which are the
continuous images of openly generated compacta,
are a superclass of the dyadic compacta
(\ie\ the continuous images of powers of 2) and
that the k-adic compacta and dyadic compacta 
satisfy the same major structural theorems 
and cardinal function equations.
Bandlow~\cite{b} translated \scepin's definition 
into the language of elementary substructures: 
a compactum $X$ is openly generated iff, for a club of
countable elementary submodels $M$ of $(\bigh,\in,\mcT_X)$,
the quotient map $\pi^X_M\colon X\rightarrow X/M$ is open.
Here $\pi^X_M$ is defined by declaring $\pi^X_M(p)\not=\pi^X_M(q)$
iff there are disjoint closed neighborhoods 
$U$ of $p$ and $V$ of $q$ such that $U,V\in M$.
It follows that a boolean algebra $B$ has the FN iff
the natural quotient map from $\ult{B}$ to $\ult{B\cap M}$ 
(the latter being homeomorphic to $\ult{B}/M$)
is open for all countable elementary submodels $M$ of
 $(\bigh,\in,\leq_B)$.

In~\cite{m}, the author used elementary quotients
and a ``large submodel'' version of Bandlow's 
characterization to prove that the 
homogeneous dyadic compacta (\eg\ the compact groups)
have some strong base properties. (For example, every 
such compactum has a base such that every infinite 
subfamily's intersection has empty interior.)
To prove similar base properties for broader 
classes of compacta, the author considered 
continuous images of compacta from proper superclasses 
of the openly generated compacta. 
If we restrict from compacta to totally disconnected compacta
(\ie\ to Stone spaces), then these
superclasses can be characterized in terms of their
clopen algebras and a two-parameter family of properties 
called the \lkfn, which the author defined for
all infinite cardinals $\lm, \ka$.  

This definition of the \lkfn\ was in terms of elementary 
submodels; it was not clear whether the it could be expressed 
purely combinatorially in the spirit of Definition~\ref{kfn}.
However, a combinatorial characterization was achieved
for some important special cases. The author observed that
the $(\ka,\ka)$-FN, the $(\ka^+,\ka)$-FN, and the $\ka$-FN
are equivalent when $\ka$ is regular.  In particular,
the FN is the $(\alo,\alo)$-FN is the $(\al_1,\alo)$-FN;
the WFN is the $(\al_1,\al_1)$-FN is the $(\al_2,\al_1$)-FN.

In this paper, we systematically study the \lkfn.
In Section~\ref{sym}, we introduce a simple
combinatorial definition, prove some preservation
theorems, and show that, for all regular infinite cardinals 
$\lm>\ka$ and $\lm'>\ka'$,
the \lkfn\  implies the $(\lm',\ka')$-FN 
iff $\lm\leq\lm'$ and $\ka\leq\ka'$.
In Section~\ref{elem}, we prove that several elementary
submodel characterizations of the \lkfn,
including the original definition from~\cite{m}, 
are all equivalent to our combinatorial definition
for all regular infinite cardinals $\lm>\ka$.
In Section~\ref{cpct}, we extend classical results
of \scepin, Shapiro, and Engelking about openly
generated compacta and their continuous images 
to \laog\ compacta and their continuous images,
where the property of being $(\lm,\ka)$-openly generated 
is the natural generalization to all compacta 
of the Stone dual of the elementary submodel characterization 
of the \lkfn.

\section{Breaking a hidden symmetry of the $\ka$-FN}\label{sym}

Consider the following silly definition.

\begin{definition}\label{kkfn}
Given a partial order $P$ and an infinite cardinal $\ka$, 
$P$ has the $(\ka,\ka)$-FN iff there is a map 
$(f,g)\colon P\rightarrow[P]^{<\ka}\times[P]^{<\ka}$
such that for all $p\leq q$ in $P$, 
$p\leq r\leq q$ for some $r\in f(p)\cap g(q)$ and
$p\leq s\leq q$ for some $s\in g(p)\cap f(q)$.  
Such an $(f,g)$ is called a $(\ka,\ka)$-FN map.
\end{definition}

The definition is silly because we could declare 
$h(x)=f(x)\cup g(x)$ for all $x\in P$ to get a
$\ka$-FN map $h$ from a $(\ka,\ka)$-FN map $(f,g)$;
every $\ka$-FN map $h$ conversely yields
a $(\ka,\ka)$-FN map $(h,h)$. The $(\ka,\ka)$-FN
is just a more complicated, logically equivalent
version of the $\ka$-FN.
However, this definition has a serious purpose.
It reveals a previously unseen symmetry of the
$\ka$-FN and an obvious way to break that symmetry.

\begin{definition}\label{lkfn}
Given a partial order $P$ and 
infinite cardinals $\lm\geq\ka$, 
$P$ has the \lkfn\  iff there is a map 
$(f,g)\colon P\rightarrow[P]^{<\lm}\times[P]^{<\ka}$
such that for all $p\leq q$ in $P$, 
$p\leq r\leq q$ for some $r\in f(p)\cap g(q)$ and
$p\leq s\leq q$ for some $s\in g(p)\cap f(q)$.  
Such an $(f,g)$ is called a \lkfn\  map.
\end{definition}

Clearly, if $\lm\leq\lm'$ and $\ka\leq\ka'$, then
the \lkfn\  implies the $(\lm',\ka')$\nbd-FN,
assuming both properties are defined.
Let us show that the converse is almost true
(see Theorem~\ref{hierarchy}).
First, observe that every partial order $P$
has the $(\card{P}^+,\alo)$-FN:
just let $f(x)=P$ and $g(x)=\{x\}$ for all $x$.
In constrast, the following lemma 
gives an example of a boolean algebra of arbitrary
regular infinite cardinality $\lm$ that does not have 
the \lkfn\ for any regular $\ka<\lm$.

\begin{lemma}\label{int}
Given a regular infinite cardinal $\lm$, 
the interval algebra on $\lm$ does not have the \lkfn\
for any regular $\ka<\lm$.
\end{lemma}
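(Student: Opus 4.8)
The plan is to exhibit a single canonical $\lm$\nbd-chain inside the interval algebra $B$ and derive a contradiction from any hypothetical \lkfn\ map by a pressing-down argument. For $\alpha<\lm$ write $a_\alpha=[0,\alpha)$ for the initial-segment element of $B$; then $\langle a_\alpha:\alpha<\lm\rangle$ is a strictly increasing chain, with $a_\alpha\leq a_\beta$ iff $\alpha\leq\beta$, and for limit $\beta$ the supremum $\bigvee_{\alpha<\beta}a_\alpha$ is computed as the union $\bigcup_{\alpha<\beta}[0,\alpha)=[0,\beta)=a_\beta$. Suppose toward a contradiction that $(f,g)$ is a \lkfn\ map on $B$, so that $f$ takes values in $[B]^{<\lm}$ and $g$ takes values in $[B]^{<\ka}$.

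The first key step is to show that for suitable $\beta$ the small-valued function $g$ forces the interpolant ``at the top'' to collapse to $a_\beta$ itself. Fix $\beta<\lm$ with $\cf(\beta)=\ka$; such ordinals exist since $\ka<\lm$, and in fact $E=\{\beta<\lm:\cf(\beta)=\ka\}$ is stationary in $\lm$. Choose an increasing cofinal sequence $\langle\alpha_i:i<\ka\rangle$ in $\beta$. For each $i$, applying the first clause of the \lkfn\ to $a_{\alpha_i}\leq a_\beta$ yields $r_i\in f(a_{\alpha_i})\cap g(a_\beta)$ with $a_{\alpha_i}\leq r_i\leq a_\beta$. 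Since $\card{g(a_\beta)}<\ka$ and $\ka$ is regular, some single $r\in g(a_\beta)$ satisfies $r_i=r$ for a cofinal set of $i<\ka$. For those $i$ we have $[0,\alpha_i)\subseteq r\subseteq[0,\beta)$, and taking the union over this cofinal set gives $[0,\beta)\subseteq r\subseteq[0,\beta)$, whence $r=a_\beta$. Therefore $a_\beta\in f(a_{\alpha_i})$ for cofinally many $i$; in particular we may fix some $\gamma_\beta<\beta$ with $a_\beta\in f(a_{\gamma_\beta})$.

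The second step is a Fodor argument. The map $\beta\mapsto\gamma_\beta$ is regressive on the stationary set $E$, so by Fodor's lemma there are a stationary $E'\subseteq E$ and a fixed ordinal $\gamma^*$ with $\gamma_\beta=\gamma^*$ for all $\beta\in E'$. Then $a_\beta\in f(a_{\gamma^*})$ for every $\beta\in E'$. Since distinct $\beta$ give distinct elements $a_\beta$ and $\card{E'}=\lm$, the set $f(a_{\gamma^*})$ has cardinality $\lm$, contradicting $f(a_{\gamma^*})\in[B]^{<\lm}$. This contradiction shows that $B$ has no \lkfn\ map.

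I expect the main obstacle to be the first step. In the interval algebra, unlike in the bare chain $\lm$, an interpolant between $a_\alpha$ and $a_\beta$ need not lie on the chain: any element $r$ with $[0,\alpha)\subseteq r\subseteq[0,\beta)$ is admissible, so a priori there are many candidate interpolants and no single one is forced. The crucial leverage is the asymmetry of the \lkfn: by evaluating the small-valued function $g$ at the \emph{top} element $a_\beta$ and choosing $\beta$ of cofinality exactly $\ka$, the budget $\card{g(a_\beta)}<\ka$ is too small to accommodate a cofinal family of distinct interpolants, which pins the repeated interpolant down to $a_\beta$ and feeds the pressing-down argument. Both regularity hypotheses are genuinely used: regularity of $\ka$ in the pigeonhole that forces the collapse to $a_\beta$, and regularity of $\lm$ for the stationarity of $E$ and for Fodor's lemma.
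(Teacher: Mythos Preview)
Your proof is correct and takes a genuinely different route from the paper's. The paper argues via an elementary submodel $M\elemsub(\bigh,\in,f)$ of size less than $\lm$ chosen so that $\delta=\lm\cap M$ has cofinality $\ka$; setting $b=[0,\delta)$, it observes that every element of $g(b)\cap M$ lying below $b$ is bounded strictly below $\delta$, and since $\card{g(b)}<\ka=\cf(\delta)$ a single $\alpha<\delta$ bounds them all, so $a=\{\alpha\}$ admits no interpolant in $f(a)\cap g(b)$. Your argument instead works directly with the chain $\langle a_\alpha\rangle$: at each $\beta\in E^\lm_\ka$ the pigeonhole on $g(a_\beta)$ forces the repeated interpolant to equal $a_\beta$ itself, feeding a regressive map and a Fodor collapse that overloads a single $f$-value.

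Both arguments hinge on the same asymmetry (evaluate $g$ at the top, exploit $\card{g(\cdot)}<\ka$ against cofinality $\ka$), but the packaging differs. Your approach is more self-contained, needing only Fodor's lemma rather than elementary-submodel machinery. The paper's approach, by contrast, is a warm-up for Section~\ref{elem}, where the \lkfn\ is characterised precisely by $P\cap M\leq_\ka P$ for suitable $M$; its proof of Lemma~\ref{int} is essentially a direct verification that this submodel criterion fails. One small remark: your use of Fodor tacitly requires $\lm$ uncountable, but this is automatic since a regular infinite $\ka<\lm$ forces $\lm\geq\al_1$.
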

\begin{proof}
Let $B$ be the interval algebra on $\lm$, \ie\ the 
algebra of finite unions of intervals of the form
$[\alpha,\beta)$ where $\alpha\leq\beta\leq\lm$.
\Wma\ we are not in the trivial case $\lm\leq\alo$.
To see that $B$ does not have the $(\lm,\ka)$-FN, 
suppose that $(f,g)\colon B\rightarrow[B]^{<\lm}\times[B]^{<\ka}$.
Let $M$ be an elementary submodel of 
$\left(H\left(\left(2^\lm\right)^+\right),\in,f\right)$
of size less than $\lm$.
Since $\lm$ is regular,
by taking the union of an appropriate
elementary chain, we can find $M$ as above
such that $\cf(\delta)=\ka$ where $\delta=\lm\cap M\in\lm$
and (consequently) $f(a)\subseteq M$ for all $a\in B\cap M$.

Let $b=[0,\delta)$.  It suffices to show that
for some $a\in B\cap M$, there is no $c\in g(b)\cap M$
satisfying $a\subseteq c\subseteq b$.
Since $\ka$ is regular and every $c\in M$ that is
a subset of $[0,\delta)$ satisfies $\sup(c)<\delta$,
there exists $\alpha<\delta$ such that $\sup(c)<\alpha$
for all $c\in g(b)\cap M$ with $c\subseteq b$.  
Hence, $a=\{\alpha\}$ is as desired.
\end{proof}

\begin{remark}
In contrast, by Proposition~2.1 of~\cite{fks}, 
the interval algebra on $\lm$ does have the $(\lm,\lm)$-FN, simply
because every infinite partial order $P$
has a $\card{P}$-FN map $q\mapsto\{p\in P: p\sqsubseteq q\}$
where $\sqsubseteq$ is a well-ordering of $P$ of
type $\card{P}$.
\end{remark}

Now, given infinite cardinals $\lm$ and $\ka$,
let $T$ be the tree $({}^{<\ka}\lm,\subseteq)$ and
let $\mcD$ consist of all subsets of $T$ of the
form $D(F)=\bigcup_{s\in F}\{t\in T:t\subsetneq s\}$
where $F$ is a finite subset of $T$.
For each $I\in\mcD$, let $Z(I)=\{p\in{}^T 2:p[I]=\{0\}\}$.
Let $B$ be the subalgebra of $\powset{{}^T 2}$ generated
by $\{Z(I):I\in\mcD\}$.

\begin{lemma}\label{lsupp}
Assuming that $\cf(\lm)\geq\ka$,
the algebra $B$ defined above has the $\ka$\nbd-FN
but does not have the $(\lm,\ka')$-FN 
for any regular infinite $\ka'<\ka$.
\end{lemma}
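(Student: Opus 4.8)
Throughout write $a_s=Z(D(\{s\}))$ for $s\in T$ and let $-x$ denote Boolean complement in $\powset{{}^T2}$. Since $D(\{s\})=\{t:t\subsetneq s\}$, we have $a_t\le a_s$ whenever $s\subseteq t$, and $a_s=\bigwedge_{\beta<\gamma}a_{s\restrict\beta}$ when $s$ has limit length $\gamma$. As the generators are closed under meet ($Z(I)\cap Z(J)=Z(I\cup J)$), every $b\in B$ is a finite Boolean combination of the $a_s$; fix a finite $F_b\subseteq T$ with $\supp{b}\subseteq D(F_b)$, and note that $D(F_b)$, a finite union of chains of length $<\ka$, has size $<\ka$.

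\emph{The $\ka$-FN.} Let $f(b)$ be the subalgebra generated by $\{Z(D(G)):G\in[T]^{<\om},\ D(G)\subseteq D(F_b)\}$, i.e.\ the elements of $B$ supported on $D(F_b)$; the down-closed subsets of $D(F_b)$ of this form number $<\ka$, so $\card{f(b)}<\ka$. Given $p\le q$, set $R=D(F_p)\cap D(F_q)$ and let $r=\{x\in{}^T2:(\exists y)(y\restrict R=x\restrict R\ \wedge\ y\in p)\}$ be the cylindrification of $p$ onto $R$. Extending any witness $y\in p$ by $x$ off $D(F_p)$ produces $y'\in p\le q$ with $y'\restrict D(F_q)=x\restrict D(F_q)$, so $p\le r\le q$; and evaluating $r$ on each cell $Z(I)\cap\bigcap_j(-Z(J_j))$ of $p$ shows that, modulo the factor $Z(I\cap R)$, the non-down-closed contributions cancel and $r$ is a Boolean combination of sets $Z(D(G))$ with $D(G)\subseteq R$. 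As each such $D(G)\subseteq R\subseteq D(F_p),D(F_q)$, we get $r\in f(p)\cap f(q)$, so $f$ is a $\ka$-FN map. (The hypothesis $\cf(\lm)\ge\ka$ plays no role in this direction.)

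\emph{Failure of the $(\lm,\ka')$-FN, $\ka'<\ka$ regular.} Mimicking Lemma~\ref{int}, suppose $(f,g)\colon B\to[B]^{<\lm}\times[B]^{<\ka'}$ and build a continuous elementary chain $\langle M_\xi:\xi<\ka'\rangle$ in $(\bigh,\in,f,g)$ (with $\theta$ large) of sizes $<\lm$, together with a branch $s\in{}^{\ka'}\lm$, so that at stage $\xi$: (i) $s\restrict(\xi+1)\in M_{\xi+1}$ and $f\bigl(a_{s\restrict\xi}\setminus a_{s\restrict(\xi+1)}\bigr)\subseteq M_{\xi+1}$; (ii) $s(\xi)$ differs from the $\xi$-th entry of every $u\in M_\xi\cap T$ of length $\ge\ka'$ with $u\restrict\xi=s\restrict\xi$ (there are $<\lm$ forbidden values, so such $s(\xi)$ exists). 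Put $M=\bigcup_\xi M_\xi$. Because $\cf(\lm)\ge\ka>\ka'$, a $\ka'$-union of sets of size $<\lm$ has size $<\lm$, so $\card M<\lm$; every $s\restrict\beta$ ($\beta<\ka'$) lies in $M$; and (ii) guarantees that no node of $M$ of length $\ge\ka'$ extends $s$, whence $s\notin M$.

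\emph{The diagonalization.} Let $b=-a_s$. If $r\in B\cap M$ is nonempty with $r\le b$, then $\supp{r}\subseteq D(F_r)\subseteq M$ meets $D(\{s\})$ in a set bounded below some $\beta_r<\ka'$ (no $M$-node extends $s$); modifying any point of $r$ to vanish off $D(F_r)$ then shows that every point of $r$ already meets $1$ below $\beta_r$, i.e.\ $r\le -a_{s\restrict\beta_r}$. Since $\card{g(b)}<\ka'=\cf(\ka')$, all $r\in g(b)\cap M$ with $r\le b$ satisfy $r\le -a_{s\restrict\beta^*}$ for a single $\beta^*<\ka'$. Now $a:=a_{s\restrict\beta^*}\setminus a_{s\restrict(\beta^*+1)}\in B\cap M$ satisfies $a\le b$ but $a\not\le -a_{s\restrict\beta^*}$. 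Applying the $(\lm,\ka')$-FN to $a\le b$ yields $r\in f(a)\cap g(b)$ with $a\le r\le b$; by (i), $f(a)\subseteq M$, so $r\in g(b)\cap M$ and $r\le -a_{s\restrict\beta^*}$, forcing $a\le -a_{s\restrict\beta^*}$ — a contradiction. The main obstacle is the construction of the previous paragraph: keeping $\card M<\lm$ while simultaneously capturing $f$ on the $\ka'$ relevant elements and steering $s$ out of $M$ is exactly where $\cf(\lm)\ge\ka>\ka'$ is used.
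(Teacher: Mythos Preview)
Your proof is correct. The $\ka$-FN half is essentially the paper's argument: both define $f(b)$ (the paper's $h$) as the subalgebra $A(D(F_b))$ of elements supported on $D(F_b)$, and both interpolate via the cylindrification $r=\pi^{-1}[\pi[p]]$ onto $R=D(F_p)\cap D(F_q)$. Your justification that $r\in A(R)$ (``the non-down-closed contributions cancel'') is hand-wavy, but the paper simply asserts $\pi^{-1}[\pi[a]]\in h(a)\cap h(b)$ without proof, so you are no worse off. (The honest computation: writing $p=\bigvee_k\bigl(Z(I_k)\setminus\bigvee_l Z(J_{k,l})\bigr)$ with $I_k\subseteq J_{k,l}$, one checks $\pi^{-1}[\pi[\,\cdot\,]]$ of each disjunct equals $Z(I_k\cap R)\setminus\bigvee\{Z(J_{k,l}\cap R):J_{k,l}\subseteq I_k\cup R\}$, and $\mcD$ is closed under intersection.)

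For the failure of the $(\lm,\ka')$-FN you take a genuinely different route. The paper builds \emph{two} interleaved chains $(M_i)_{i\le\ka'}$ and $(N_i)_{i\le\ka'}$, uses the order of $\lm$ to produce the branch $\psi\colon\ka'\to C=\{\min(\lm\setminus M_i):i<\ka'\}$, sets $a=Z(D(\{\psi\}))$, and defeats the $g(a)\cap f(b)$ clause by finding $b\in B\cap N$ above $a$ with $f(b)\subseteq M$. You instead build a \emph{single} chain, diagonalize directly on the tree to produce $s$, take $b=-Z(D(\{s\}))$, and defeat the dual clause $f(a)\cap g(b)$ by arranging in advance that $f(a_{s\restrict\xi}\setminus a_{s\restrict(\xi+1)})\subseteq M_{\xi+1}$ for every $\xi$. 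Your approach is a bit leaner (no second chain, no appeal to $\min(\lm\setminus M_i)$ or $\varepsilon$), at the cost of having to pre-load $f$ of $\ka'$ specific elements into $M$; the paper's two-chain trick handles all of $f[B\cap N]$ at once. Both exploit $\cf(\lm)\ge\ka>\ka'$ in the same place: to keep $\card{M}<\lm$ after a $\ka'$-length union.

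One small inaccuracy: your claim ``every $s\restrict\beta$ lies in $M$'' is not guaranteed at limit $\beta$ by condition~(i) as stated. This is harmless, since the argument only needs $f(a_{s\restrict\beta^*}\setminus a_{s\restrict(\beta^*+1)})\subseteq M$, which (i) gives directly; the assertion $a\in M$ is never actually used. If you want the claim literally true, also require $s\restrict\xi\in M_{\xi+1}$ at limit $\xi$.
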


\begin{proof}
First, we prove that $B$ has a $\ka$-FN map $h$. 
For each $a\in B$, choose $\supp{a}$ to be some $H\in\mcD$ 
such that $a$ is in the subalgebra $A(H)$ generated by
$\{Z(I):H\supseteq I\in\mcD\}$; set $h(a)=A(\supp{a})$.
Suppose that $a,b\in B$ and $a\subseteq b$.
It follows that $\inv{\pi}[\pi[a]]\subseteq b$ where $\pi$
is the coordinate projection from ${}^T 2$ to ${}^J2$
where $J=\supp{a}\cap\supp{b}\in\mcD$.  
(To see this, suppose that $x\in {}^T 2$, $y\in a$, and
$x\restrict J=y\restrict J$. Set $z=(y\restrict\supp{a})\cup 
(x\restrict(T\setminus\supp{a})$. Since $z$ agrees with 
$y$ on $\supp{a}$, we have $z\in a\subseteq b$. Since
$x$ agrees with $z$ on $\supp{b}$, we also have $x\in b$.)
Hence, $\inv{\pi}[\pi[a]]\in h(a)\cap h(b)$
and $a\subseteq\inv{\pi}[\pi[a]]\subseteq b$.

To see that $B$ does not have the $(\lm,\ka')$-FN, 
suppose that $(f,g)\colon B\rightarrow[B]^{<\lm}\times[B]^{<\ka'}$.
Choose continuous elementary chains $(M_i:i\leq\ka')$ and
$(N_i:i\leq\ka')$ of elementary submodels of 
$\left(H\left(\left(2^{\card{T}}\right)^+\right),\in,\lm,\ka\right)$ 
such that, for all $i<\ka'$, we have $\card{M_i}<\lm$, 
$\card{N_i}\leq\om+i$, $\{M_i\}\cup\bigcup f[B\cap N_i]
\subseteq M_{i+1}$, and $(M_j)_{j\leq i}\in N_{i+1}$. 
Set $M=M_{\ka'}$, $N=N_{\ka'}$, $\delta=\min(\lm\setminus M)$, 
$\varepsilon=\min(M\cap(\delta,\lm])$, and
$C=\{\min(\lm\setminus M_i):i<\ka'\}$.
By construction, $C$ is cofinal in $\delta$, $C$ has order type $\ka'$, 
and $C\cap\gamma\in N$ for all $\gamma<\delta$. 
Moreover, $f(b)\subseteq M$ for all $b\in B\cap N$.

Let $\psi$ be the increasing bijection from $\ka'$ to $C$;
set $a=Z(D(\{\psi\}))$.
To see that $(f,g)$ is not a $(\lm,\ka')$\nbd-FN map for $B$, 
it suffices to find $b\in B\cap N$ such that
$a\subseteq b$ but there is no $c\in g(a)\cap M$
satisfying $a\subseteq c\subseteq b$. 
If $c\in M\cap\upset a$, then, by elementarity,
$c\supseteq Z(D(\{\varphi\}))$ for some $\varphi\in({}^{<\ka}\lm)\cap M$
such that $\sup(\ran(\varphi))<\varepsilon$.
Since $M\cap[\delta,\varepsilon)=\varnothing$, 
every such $\varphi$ actually satisfies $\sup(\ran(\varphi))<\delta$.
Since $\cf(\delta)>\card{g(a)}$, there exists $\beta<\ka'$
such that every $c\in g(a)\cap M\cap\upset a$
contains some $Z(D(\{\varphi\}))$ with $\sup(\ran(\varphi))<\psi(\beta)$.
Therefore, $b=Z(D(\{\psi\restrict(\beta+1)\}))$ is as desired.
\end{proof}

\begin{theorem}\label{hierarchy}
Let $(\lm,\ka)$ and $(\lm',\ka')$ each be a 
strictly decreasing pair of regular infinite cardinals.
The following are equivalent.
\begin{enumerate}
\item\label{prodord} $\lm\leq\lm'$ and $\ka\leq\ka'$.
\item\label{fnimpposet} Every poset with the \lkfn\ 
has the $(\lm',\ka')$\nbd-FN.
\item\label{fnimpbool} Every boolean algebra with the \lkfn\ 
has the $(\lm',\ka')$\nbd-FN.
\end{enumerate}
\end{theorem}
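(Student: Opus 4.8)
The plan is to prove the cycle (\ref{prodord})$\Rightarrow$(\ref{fnimpposet})$\Rightarrow$(\ref{fnimpbool})$\Rightarrow$(\ref{prodord}), with essentially all the content in the last implication. For (\ref{prodord})$\Rightarrow$(\ref{fnimpposet}) I would invoke the observation made just before the theorem: when $\lm\le\lm'$ and $\ka\le\ka'$ we have $[P]^{<\lm}\subseteq[P]^{<\lm'}$ and $[P]^{<\ka}\subseteq[P]^{<\ka'}$, so any \lkfn\ map is already a $(\lm',\ka')$\nbd-FN map. I would record this as monotonicity of the hierarchy and reuse it below for arbitrary admissible pairs of cardinals. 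The implication (\ref{fnimpposet})$\Rightarrow$(\ref{fnimpbool}) is immediate, since boolean algebras are in particular posets.

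For (\ref{fnimpbool})$\Rightarrow$(\ref{prodord}) I would argue by contraposition: assuming $\neg$(\ref{prodord}), \ie\ $\lm>\lm'$ or $\ka>\ka'$, I exhibit in each (possibly overlapping) case a boolean algebra that has the \lkfn\ but not the $(\lm',\ka')$\nbd-FN. If $\lm>\lm'$, take $B$ to be the interval algebra on $\lm'$. Since $\card{B}=\lm'$, $B$ has the $((\lm')^+,\alo)$\nbd-FN by the observation that every partial order has the $(\card{P}^+,\alo)$\nbd-FN; as $\lm>\lm'$ forces $(\lm')^+\le\lm$ and trivially $\alo\le\ka$, monotonicity upgrades this to the \lkfn. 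Meanwhile $\ka'<\lm'$ is regular, so Lemma~\ref{int} denies $B$ the $(\lm',\ka')$\nbd-FN, and $B$ is the required witness.

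If instead $\ka>\ka'$, I would apply Lemma~\ref{lsupp} to the algebra $B$ built from the tree $({}^{<\nu}\lm',\subseteq)$ with $\nu=(\ka')^+$; that is, I instantiate the construction preceding Lemma~\ref{lsupp} with $\lm'$ in the role of its ``$\lm$'' and $(\ka')^+$ in the role of its ``$\ka$''. Here $\lm'>\ka'$ regular gives $\cf(\lm')=\lm'\ge(\ka')^+=\nu$, so the hypothesis of the lemma holds and $B$ has the $\nu$\nbd-FN, equivalently the $(\nu,\nu)$\nbd-FN since $\nu$ is regular. As $\ka>\ka'$ gives $\nu\le\ka$ and $\lm>\ka\ge\nu$ gives $\nu\le\lm$, monotonicity again yields the \lkfn. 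Finally Lemma~\ref{lsupp} denies $B$ the $(\lm',\tilde\ka)$\nbd-FN for every regular $\tilde\ka<\nu$, and $\ka'<(\ka')^+=\nu$ puts $\tilde\ka=\ka'$ in range, so $B$ lacks the $(\lm',\ka')$\nbd-FN.

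The only real obstacle is the parameter bookkeeping in the second case: the tree height must be chosen so that Lemma~\ref{lsupp} simultaneously yields a positive property strong enough to imply the \lkfn\ and a failure pinned exactly at the exponent $\ka'$. Setting the height bound to the successor $(\ka')^+$ rather than to $\ka'$ itself is what reconciles these demands---it is the least regular cardinal above $\ka'$, hence at most $\min(\lm,\ka)$ under the case hypotheses, yet it still leaves $\ka'$ admissible as an exponent below it in the lemma. I would take care to check each of the inequalities $(\ka')^+\le\ka$, $(\ka')^+\le\lm$, and $(\ka')^+\le\lm'$ that the cited lemma and monotonicity consume, as these are exactly where the case hypotheses enter.
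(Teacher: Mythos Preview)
Your proof is correct and follows essentially the same approach as the paper: the same trivial forward implications, the same contrapositive for (\ref{fnimpbool})$\Rightarrow$(\ref{prodord}), the interval algebra from Lemma~\ref{int} for the $\lm>\lm'$ case, and the tree algebra from Lemma~\ref{lsupp} for the $\ka>\ka'$ case. The only cosmetic differences are that the paper orders the two cases oppositely (handling $\ka'<\ka$ first) and, in the Lemma~\ref{lsupp} application, instantiates the tree parameters as $(\max(\lm,\lm'),\ka)$ rather than your $(\lm',(\ka')^+)$; both choices satisfy the lemma's cofinality hypothesis and both yield the required positive and negative conclusions via the same monotonicity.
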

\begin{proof}
$\eqref{prodord}\Rightarrow\eqref{fnimpposet}\Rightarrow\eqref{fnimpbool}$
is trivial. We prove the contrapositive of 
$\eqref{fnimpbool}\Rightarrow\eqref{prodord}$. 

Case $\ka'<\ka$:
Let $\mu=\max(\lm,\lm')$. By Lemma~\ref{lsupp}, 
there is a boolean algebra $B$ that has
the $\ka$\nbd-FN but lacks the $(\mu,\ka')$\nbd-FN.
Therefore, $B$ has the \lkfn\ 
but lacks the $(\lm',\ka')$\nbd-FN.

Case $\ka\leq\ka'<\lm'<\lm$:
Let $B$ be the interval algebra on $\lm'$.
Since $B$ trivially has the $(\card{B}^+,\alo)$\nbd-FN
and $\card{B}=\lm'<\lm$, the algebra
$B$ also has the $(\lm,\ka)$\nbd-FN.
However, by Lemma~\ref{int}, $B$ lacks the $(\lm',\ka')$\nbd-FN.
\end{proof}

Thus, the family of \lkfn\ properties form a truly
two-dimensional hierarchy.  In particular,
for regular uncountable cardinals $\lm\geq\al_2$, 
the logical strength of the $(\lm,\alo)$-FN lies strictly between
that of the FN and $\lm$-FN, and is incomparable with
that of the $(\ka^+,\ka)$-FN for regular uncountable $\ka$ such that $\ka^+<\lm$.
In contrast, we will show in Section~\ref{elem} that the
$(\ka^+,\ka)$-FN and $(\ka,\ka)$-FN are equivalent if $\ka$ is regular.

Next, we prove some theorems about the robustness 
of our two-dimensional heirarchy. In particular, 
we extend the classical preservation (see \cite{hs}) 
of the FN by coproducts and by the exponential operation 
(see Definition~\ref{dexp}).

\begin{theorem}\label{coprod}
Given infinite cardinals $\lm\geq\ka$ and 
boolean algebras $(B_i:i\in I)$ each with the \lkfn, 
the coproduct $B=\coprod_{i\in I}B_i$ must also have the \lkfn.
\end{theorem}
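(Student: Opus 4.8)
The plan is to realize the coproduct topologically and exploit finite supports. By Stone duality $\ult{B}=\prod_{i\in I}\ult{B_i}$, and each element of $B$ is a clopen subset of this product depending on only finitely many coordinates; write $\supp{b}$ for a finite set of coordinates on which $b$ depends, and for finite $F\subseteq I$ let $B_F$ be the subalgebra generated by the factors indexed by $F$, so that $B_F\cong\coprod_{i\in F}B_i$ and $\ult{B_F}=\prod_{i\in F}\ult{B_i}$. The key structural fact is the support-pullback identity already used in the proof of Lemma~\ref{lsupp}: if $a\le b$ and $J=\supp{a}\cap\supp{b}$, then $\inv\pi[\pi[a]]$ lies between $a$ and $b$, where $\pi$ is the projection onto the $J$-coordinates; in particular this intermediate lies in $B_J$, and each $B_F$ is relatively complete in $B$. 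This reduces every comparison to the finite coproduct on $\supp{a}\cup\supp{b}$, so it suffices to treat finite coproducts and then glue.

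First I would prove the finite case. By induction on $\card{F}$ this reduces to two factors, $B=B_0\oplus B_1$, since $B_F\cong B_{F\setminus\{i\}}\oplus B_i$. So fix \lkfn\ maps $(f_0,g_0)$ on $B_0$ and $(f_1,g_1)$ on $B_1$ and build one on $B_0\oplus B_1$. Here I think of an element $c$ as a clopen subset of $\ult{B_0}\times\ult{B_1}$, described by the coarsest finite clopen partition of $\ult{B_0}$ on whose pieces the vertical fibre $c_x\in B_1$ is constant, together with those finitely many fibre values. Given $a\le b$, every fibre satisfies $a_x\subseteq b_x$ in $B_1$; applying $(f_1,g_1)$ to these finitely many pairs produces a family of candidate fibre values of size $<\lm$ (from $f_1$) and a subfamily of size $<\ka$ (from $g_1$), while applying $(f_0,g_0)$ to the finitely many partition pieces is meant to control the regions on which a given value is used.

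The main obstacle is precisely the construction of the intermediate in the two-factor case so that it is \emph{simultaneously} recoverable from $a$ alone, within fewer than $\lm$ options, and from $b$ alone, within fewer than $\ka$ options. Interpolating the fibres is easy: over a region carrying $a$-value $A$ and $b$-value $\beta$ we may pick any $C\in f_1(A)\cap g_1(\beta)$ with $A\subseteq C\subseteq\beta$. The difficulty is the region assignment, because whether a value $C$ may be used at a point $x$ depends on both $a$ (through $a_x\subseteq C$) and $b$ (through $C\subseteq b_x$), so the natural region is coupled to both inputs. Breaking this coupling is the crux: I would process the candidate values (finitely many, respectively $<\lm$ and $<\ka$ many) in a fixed well-order and assign to each value a region obtained by interpolating its valid zone with the $B_0$-map $(f_0,g_0)$, so that each region is simultaneously an $f_0$-value computed from $a$ and a $g_0$-value computed from $b$. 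Because everything is built from finitely many fibres and regions together with finite products of the factor maps' outputs, the resulting sets have size $<\lm$ and $<\ka$ for every infinite $\lm\ge\ka$, with no regularity needed; and I would keep the construction symmetric so as to obtain both required interpolants $r\in f(a)\cap g(b)$ and $s\in g(a)\cap f(b)$.

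Finally I would globalise. Fix, coherently, a \lkfn\ map $(f_F,g_F)$ on each finite $B_F$ — coherence being arranged using the relative completeness of the $B_F$ in $B$ — and set $f(b)=f_{\supp{b}}(b)$ and $g(b)=g_{\supp{b}}(b)$. Given $a\le b$ in $B$, the support-pullback identity places a suitable intermediate in $B_J$ with $J=\supp{a}\cap\supp{b}$, and coherence of the finite-coproduct maps together with the two-factor construction applied inside $B_{\supp{a}\cup\supp{b}}$ should yield the required $r\in f(a)\cap g(b)$ and $s\in g(a)\cap f(b)$. I expect the two-factor region assignment to be the only genuinely delicate point; the support reduction and the gluing are routine once relative completeness is in hand.
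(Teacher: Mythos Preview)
Your outline correctly isolates the difficulty but does not resolve it. In the two-factor step you propose to ``assign to each value a region obtained by interpolating its valid zone with the $B_0$-map $(f_0,g_0)$, so that each region is simultaneously an $f_0$-value computed from $a$ and a $g_0$-value computed from $b$.'' But the valid zone for a candidate fibre value $C$ is $\{x:a_x\le C\}\cap\{x:C\le b_x\}$, a $B_0$-element depending on both $a$ and $b$; you have not explained what comparison in $B_0$ you are interpolating, nor why the resulting region is recoverable from $a$ alone (to land in $f(a)$) while simultaneously being recoverable from $b$ alone (to land in $g(b)$). That is exactly the coupling you set out to break, and the sentence quoted does not break it. The globalisation carries a similar gap: ``coherence'' of the maps $(f_F,g_F)$ across nested finite $F$ is asserted rather than constructed, and relative completeness of $B_F$ in $B$ does not by itself produce compatible \lkfn\ maps; without it, $f_{\supp a}(a)\cap g_{\supp b}(b)$ need not meet $B_J$ at all.

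The paper bypasses both problems with a single Interpolation-Theorem move that eliminates the two-factor case entirely. Write $x$ in disjunctive normal form $\bigvee_k\bigwedge_{i\in s_k}a_{k,i}$ and $y$ in conjunctive normal form $\bigwedge_l\bigvee_{i\in t_l}b_{l,i}$ with $a_{k,i},b_{l,i}\in B_i$. Then $x\le y$ decomposes into the inequalities $\bigwedge_{i\in s_k}a_{k,i}\le\bigvee_{i\in t_l}b_{l,i}$, and freeness of the coproduct forces each of these to be witnessed in a \emph{single} factor: $a_{k,i_{k,l}}\le b_{l,i_{k,l}}$ for some $i_{k,l}\in s_k\cap t_l$. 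Interpolate that one inequality inside $B_{i_{k,l}}$ using $(f_{i_{k,l}},g_{i_{k,l}})$, then reassemble via $\bigvee_k\bigwedge_l$. The global maps $F(x),G(x)$ are simply the subalgebras generated by the $f_i$- and $g_i$-images of the finitely many generators appearing in the chosen DNF and CNF of $x$; no fibre bookkeeping, no induction on the number of factors, and no coherence across supports is needed.
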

\begin{proof}
This proof is a generalization of the proof of Lemma~3.1 of~\cite{m}.  
Both proofs can be seen as generalizations of a proof of the Interpolation Theorem.
For each $i\in I$, let $(f_i,g_i)$ be a \lkfn\  map for $B_i$.
For notational simplicity, assume that 
$B_i\cap B_j=\{0,1\}=\{0_{B_i},1_{B_i}\}$
for all distinct $i,j\in I$.  
Now $B$ is just the distributive
lattice freely generated by $\bigcup_{i\in I}B_i$.
Therefore, every $x\in B$ can be expressed both as
$\bigvee_{k<m(x)}\bigwedge_{i\in s(x,k)}a(x,k,i)$
and as $\bigwedge_{k<n(x)}\bigvee_{i\in t(x,k)}b(x,k,i)$
where $m(x),n(x)\in\om$, $s(x,k),t(x,k)\in[I]^{<\alo}$,
$a(x,k,i),b(x,k,i)\in B_i\setminus\{0,1\}$,
$\bigvee\varnothing=0$, and $\bigwedge\varnothing=1$.
For each $x\in B$, define $F(x)\in[B]^{<\lm}$ and $G(x)\in[B]^{<\ka}$ as the
subalgebras respectively generated by the sets $F_0(x)$ and $G_0(x)$ defined below.
(For the case $\ka=\alo$, we use the fact that finite sets generate finite boolean algebras.)

\begin{align*}
F_0(x)&=\left(\bigcup_{k<m(x)}\bigcup_{i\in s(x,k)}f_i(a(x,k,i))\right)\cup\left(\bigcup_{k<n(x)}\bigcup_{i\in t(x,k)}f_i(b(x,k,i))\right)\\
G_0(x)&=\left(\bigcup_{k<m(x)}\bigcup_{i\in s(x,k)}g_i(a(x,k,i))\right)\cup\left(\bigcup_{k<n(x)}\bigcup_{i\in t(x,k)}g_i(b(x,k,i))\right)
\end{align*}

Assume $x\leq y$ in $B$.
For every $k<m(x)$ and $l<n(y)$, 
$\bigwedge_{i\in s(x,k)}a(x,k,i)\leq\bigvee_{i\in t(y,l)}b(y,l,i)$, 
which implies $a(x,k,i_{k,l})\leq b(y,l,i_{k,l})$ 
for some $i_{k,l}\in s(x,k)\cap t(y,l)$.
Hence, $a(x,k,i_{k,l})\leq c(k,l)\leq b(y,l,i_{k,l})$ 
and $a(x,k,i_{k,l})\leq d(k,l)\leq b(y,l,i_{k,l})$ 
for some $$c(k,l)\in f_{i_{k,l}}(a(x,k,i_{k,l}))\cap g_{i_{k,l}}(b(y,k,i_{k,l}))$$
and $$d(k,l)\in g_{i_{k,l}}(a(y,k,i_{k,l}))\cap f_{i_{k,l}}(b(y,k,i_{k,l})),$$
respectively.
Therefore, we have the following inequalities.
\begin{align*}
x&\leq\bigvee_{k<m(x)}\bigwedge_{l<n(y)}a(x,k,i_{k,l})&\leq\bigvee_{k<m(x)}\bigwedge_{l<n(y)}c(k,l)&\leq\bigvee_{k<m(x)}\bigwedge_{l<n(y)}b(y,l,i_{k,l})&\leq y\\
x&\leq\bigvee_{k<m(x)}\bigwedge_{l<n(y)}a(x,k,i_{k,l})&\leq\bigvee_{k<m(x)}\bigwedge_{l<n(y)}d(k,l)&\leq\bigvee_{k<m(x)}\bigwedge_{l<n(y)}b(y,l,i_{k,l})&\leq y
\end{align*}
The middle terms of the above two inequalities witness that $(F,G)$ is a \lkfn\  map for $B$.
\end{proof}

\begin{corollary}
Let $\lm$ be an infinite cardinal.
If every cofactor of a coproduct of boolean algebras has size less than $\lm$,
then the coproduct has the $(\lm,\alo)$-FN.
\end{corollary}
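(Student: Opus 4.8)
The plan is to combine Theorem~\ref{coprod} with the trivial smallness observation recorded just after Definition~\ref{lkfn}. Write the coproduct as $B=\coprod_{i\in I}B_i$, so that by hypothesis $\card{B_i}<\lm$ for every $i\in I$. The strategy is to show that each individual cofactor $B_i$ already has the $(\lm,\alo)$-FN, and then let the preservation theorem do the rest.

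First I would verify the claim about a single cofactor. Fix $i\in I$ and, in exactly the spirit of the observation that every poset $P$ has the $(\card{P}^+,\alo)$-FN, define $f_i(x)=B_i$ and $g_i(x)=\{x\}$ for all $x\in B_i$. Because the bound $\card{B_i}<\lm$ is strict we have $B_i\in[B_i]^{<\lm}$, so $f_i$ maps into $[B_i]^{<\lm}$, while $g_i$ plainly maps into $[B_i]^{<\alo}$. Given $p\leq q$ in $B_i$, the element $r=q$ lies in $f_i(p)\cap g_i(q)$ and satisfies $p\leq r\leq q$, and symmetrically $s=p$ lies in $g_i(p)\cap f_i(q)$ with $p\leq s\leq q$. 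Hence $(f_i,g_i)$ is a $(\lm,\alo)$-FN map for $B_i$. (Equivalently, one may cite the observation to get the $(\card{B_i}^+,\alo)$-FN and then apply the monotonicity remark, using $\card{B_i}^+\leq\lm$, to pass to the $(\lm,\alo)$-FN; the direct construction above is the same idea with the intermediate step suppressed.)

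Having established that every $B_i$ has the $(\lm,\alo)$-FN, I would simply invoke Theorem~\ref{coprod} with $\ka=\alo$ to conclude that $B$ has the $(\lm,\alo)$-FN. There is no genuine obstacle here: all of the substantive content lives in Theorem~\ref{coprod}, and the corollary contributes only the elementary observation that the strict size bound lets the entire cofactor serve as the constant value of the $f$-part of an FN map. The two points worth a passing glance are that $\lm\geq\alo$, so that the $(\lm,\alo)$-FN is a defined property in the first place, and that the bound on the cofactors is strict rather than merely $\leq\lm$; both are guaranteed by the hypotheses, so the argument closes immediately.
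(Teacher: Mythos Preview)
Your proof is correct and is exactly the argument the paper intends: each cofactor trivially has the $(\lm,\alo)$-FN via $f_i(x)=B_i$ and $g_i(x)=\{x\}$ (the observation following Definition~\ref{lkfn}), and then Theorem~\ref{coprod} with $\ka=\alo$ transfers this to the coproduct.
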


In particular, the above corollary implies the 
classical result that free boolean algebras have the FN.

\begin{definition}\label{dexp}
For a given boolean algebra $B$, 
the exponential $\exp(B)$ is the clopen algebra of 
the Vietoris hyperspace $2^{\ult{B}}$ of nonempty closed subsets of $\ult{B}$.
\end{definition}

$2^{\ult{B}}$ is homeomorphic to 
the space $\mcF(B)$ of filters of $B$ topologized by 
declaring open all sets of the form $[a]=\{F\in\mcF(B): a\in F\}$
and $-[a]=\{F\in\mcF(B): a\not\in F\}$ where $a\in B$.
It has a clopen base consisting of sets of the form 
$[\bigvee_{i<n}a_i]\wedge\bigwedge_{i<n}-[-a_i]$
where $a_0,\ldots,a_{n-1}$ are nonzero and pairwise incompatible.  

In purely algebraic terms,
$\exp(B)$ is the boolean algebra freely generated by 
$\{[a]:a\in B\}$ modulo the relations 
in the sets $\{[0]=0, [1]=1\}$,
$\{[a]\leq[b]: B\ni a\leq b\in B\}$,
and $\{[a]\wedge[b]=[a\wedge b]: a,b\in B\}$.
(See~\cite{bank}.)
Note that $[a]\vee[b]\leq[a\vee b]$
and $[-a]\leq -[a]$ always hold, 
but equality fails in general.

\begin{theorem}\label{exp}
Given infinite cardinals $\lm\geq\ka$ 
and a boolean algebra $B$ with the \lkfn,
$\exp(B)$ also has the \lkfn.
\end{theorem}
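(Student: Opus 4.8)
The plan is to lift a $(\lm,\ka)$-FN map $(f,g)$ of $B$ to $\exp(B)$ by working with the two canonical maps $a\mapsto[a]$ and $a\mapsto\la a\ra$, where $\la a\ra:=-[-a]$. The first preserves meets, the second joins, and the mixed identity $[a]\wedge\la b\ra=[a]\wedge\la a\wedge b\ra$ holds. Using the clopen base of $\mcF(B)$, every $z\in\exp(B)$ has a disjunctive form $z=\bigvee_{k<m}\bigl([a_k]\wedge\bigwedge_{i<n_k}\la\beta_{k,i}\ra\bigr)$ with $\beta_{k,i}\le a_k$; applying this to $-z$ and complementing yields a dual form expressing $z$ as a finite meet $\bigwedge_l Y_l$ in which each $-Y_l$ is again such a term $[c_l]\wedge\bigwedge_j\la\gamma_{l,j}\ra$, $\gamma_{l,j}\le c_l$. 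First I would fix such a pair of forms for every element and let $S(z)$ be the finite subalgebra of $B$ generated by all the $a_k,\beta_{k,i},c_l,\gamma_{l,j}$ occurring in them.

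Next I would define the supports. Let $A_z$ and $A_z'$ be the subalgebras of $B$ generated by $\bigcup_{s\in S(z)}f(s)$ and $\bigcup_{s\in S(z)}g(s)$, of sizes $<\lm$ and $<\ka$ respectively, and let $F(z)$ and $G(z)$ be the subalgebras of $\exp(B)$ generated by $\{[a]:a\in A_z\}$ and $\{[a]:a\in A_z'\}$. The point of passing to \emph{subalgebras} of $B$ rather than to the raw sets $f(s),g(s)$ is that $A_z,A_z'$ are closed under complementation, so that both the boxes $[a]$ and the diamonds $\la a\ra=-[-a]$ for $a$ in these subalgebras lie in $F(z)$ and $G(z)$; without this the join-type generators would be unavailable. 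One then checks $|F(z)|<\lm$ and $|G(z)|<\ka$ (the case $\ka=\alo$ using that finite sets generate finite algebras, exactly as in Theorem~\ref{coprod}).

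To verify the interpolation property for $x\le y$, I would write $x=\bigvee_k X_k$ in disjunctive form and $y=\bigwedge_l Y_l$ in dual form, so that $x\le y$ iff $X_k\wedge -Y_l=0$ for all $k,l$. The computational heart is the vanishing criterion: for $v_0,\dots,v_{N-1}\le u$ in $B$, the element $[u]\wedge\bigwedge_{t<N}\la v_t\ra$ of $\exp(B)$ equals $0$ iff $u=0$ or some $v_t=0$. This is immediate from Stone duality: if $u\ne0$ and all $v_t\ne0$, choosing one ultrafilter from each clopen set $\widehat{v_t}=\{p\in\ult B:v_t\in p\}$ yields a nonempty finite, hence closed, $K\subseteq\widehat u$ meeting every $\widehat{v_t}$, witnessing nonvanishing. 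Rewriting $X_k\wedge -Y_l=[a_k\wedge c_l]\wedge\bigwedge_i\la\beta_{k,i}\wedge c_l\ra\wedge\bigwedge_j\la\gamma_{l,j}\wedge a_k\ra$ via the mixed identity, the criterion shows that for each $(k,l)$ at least one of the following holds: (i) $a_k\wedge c_l=0$; (ii) $\beta_{k,i}\wedge c_l=0$ for some $i$; (iii) $\gamma_{l,j}\wedge a_k=0$ for some $j$.

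Finally I would build the interpolant one disjunct at a time. Fixing $k$, assign each $l$ to one valid case by a fixed priority, with witnesses $j_l$ in case (iii) and $i_l$ in case (ii), and set $M_k=\bigwedge\{-c_l:\text{case (i)}\}\wedge\bigwedge\{-\gamma_{l,j_l}:\text{case (iii)}\}$ and $N_{k,i}=\bigwedge\{-c_l:i_l=i\}$, both lying in $S(y)$. The case hypotheses give exactly $a_k\le M_k$ and $\beta_{k,i}\le N_{k,i}$, so the FN map on $B$ supplies interpolants $a_k\le a_k'\le M_k$ with $a_k'\in f(a_k)\cap g(M_k)$ and $\beta_{k,i}\le\beta_{k,i}'\le N_{k,i}$ with $\beta_{k,i}'\in f(\beta_{k,i})\cap g(N_{k,i})$, all lying in $A_x\cap A_y'$; hence $R_k:=[a_k']\wedge\bigwedge_i\la\beta_{k,i}'\wedge a_k'\ra$ lies in $F(x)\cap G(y)$. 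Monotonicity gives $X_k\le R_k$, and the defining meets $M_k,N_{k,i}$ guarantee that for every $l$ one of $a_k'\wedge c_l=0$, $\beta_{k,i}'\wedge c_l=0$, $\gamma_{l,j_l}\wedge a_k'=0$ holds, so the vanishing criterion yields $R_k\wedge -Y_l=0$; thus $r:=\bigvee_k R_k\in F(x)\cap G(y)$ satisfies $x\le r\le y$. The symmetric interpolant $s\in G(x)\cap F(y)$ is produced identically from the other clause of the FN map, taking $a_k''\in g(a_k)\cap f(M_k)$ and so on. The step I expect to be most delicate is precisely the choice of the single elements $a_k'$ and $\beta_{k,i}'$ that must simultaneously settle all clauses $l$ at once: the construction hinges on packaging the per-pair alternatives into the two meets $M_k$ and $N_{k,i}$, so that one interpolation in $B$ serves every $l$, while keeping the interpolants inside the complement-closed support algebras so that their diamonds remain in $F(x)\cap G(y)$.
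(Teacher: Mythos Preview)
Your proof is correct and follows the same overall architecture as the paper's: define $F(z),G(z)$ as the subalgebras of $\exp(B)$ generated by $\{[a]:a\in A\}$ where $A$ is the subalgebra of $B$ generated by the $f$-images (resp.\ $g$-images) of a finite support algebra $S(z)$, then verify interpolation for $x\le y$ via a three-case analysis of why each $X_k\wedge(-Y_l)$ vanishes. Your vanishing criterion for $[u]\wedge\bigwedge_t\la v_t\ra$ is exactly a repackaging of the paper's filter-based trichotomy: your cases (i), (iii), (ii) correspond respectively to the paper's cases \eqref{notfilter}, \eqref{inyl}, \eqref{notinyl}.

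The one genuine organizational difference is in how the interpolants are assembled. The paper builds a separate element $\hat c(k,l)\in F(x)\cap G(y)\cap[x_k,y^l]$ for every pair $(k,l)$ by a single application of the $B$-FN map, and then sets $c=\bigvee_k\bigwedge_l\hat c(k,l)$. You instead aggregate all the $l$-constraints for a fixed $k$ into the meets $M_k$ and $N_{k,i}$ in $S(y)$, and then interpolate once to get $a_k'$ and once per $i$ to get $\beta_{k,i}'$, yielding a single $R_k\in[X_k,y]$; the final interpolant is $\bigvee_k R_k$. Your packaging is a bit more economical (fewer FN-map calls and no inner $\bigwedge_l$), while the paper's per-pair construction is a bit more modular and reuses the $\bigvee_k\bigwedge_l$ template from the coproduct proof. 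Neither buys anything essential over the other.
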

\begin{proof}
Like in the proof of Theorem~\ref{coprod},
given $x\in\exp(B)$, we can express $x$ as 
$\bigvee_{k<m(x)}\bigwedge_{i\in s(x,k)}a(x,k,i)$
and $\bigwedge_{k<n(x)}\bigvee_{i\in t(x,k)}b(x,k,i)$
where $m(x),n(x)\in\om$, $s(x,k),t(x,k)\in[B]^{<\alo}$,
$a(x,k,i),b(x,k,i)\in\{\pm[i]\}\setminus\{0,1\}$,
$\bigvee\varnothing=0$, and $\bigwedge\varnothing=1$.
We will use the abbreviations
$x_k=\bigwedge_{i\in s(x,k)}a(x,k,i)$ and
$x^k=\bigvee_{i\in t(x,k)}b(x,k,i)$.

Let $(f,g)$ be a \lkfn\ map for $B$.
For each $x\in B$, first define $H(x)\in[B]^{<\alo}$ as 
the (necessarily finite) subalgebra generated by 
$\bigcup_{k<m(x)}s(x,k)\cup\bigcup_{k<n(x)}t(x,k)$.
Then define $F(x)\in[\exp(B)]^{<\lm}$ as 
the subalgebra of $\exp(B)$ generated by 
$\{[h]:h\in \bigcup f[H(x)]\}$.
Analogously define $G(x)\in[\exp(B)]^{<\ka}$.

Assume $x\leq y$ in $B$.
For every $k<m(x)$ and $l<n(y)$, we then have $x_k\leq y^l$.
Let $s^+=\{i\in s(x,k):a(x,k,i)=[i]\}$ and
$s^-=\{i\in s(x,k):a(x,k,i)=-[i]\}$;
define $t^+$ and $t^-$ analogously. Observe that
$x_k=\left[\bigwedge s^+\right]
\wedge\bigwedge_{i\in s^-}(-[i])$ and
$y^l=\left(-\left[\bigwedge t^-\right]\right)
\vee\bigvee_{i\in t^+}[i]$.
Let $U=\upset\bigwedge(s^+\cup t^-)$ and let $\eta$ be the
natural isomorphism from $\exp(B)$ to the clopen algebra
of $\mcF(B)$. There are three possibilities:
\begin{enumerate}
\item\label{notfilter} $U$ is not a (proper) filter,
\item\label{inyl} $U\in\eta(y^l)$, or
\item\label{notinyl} $U\not\in\eta(y^l)$.
\end{enumerate}

If $U$ is as in case \eqref{notfilter}, then choose 
$c(k,l)\in f\left(\bigwedge s^+\right)
\cap g\left(-\bigwedge t^-\right)$
and $d(k,l)\in g\left(\bigwedge s^+\right)
\cap f\left(-\bigwedge t^-\right)$ such that 
$c(k,l),d(k,l)\in\left[\bigwedge s^+,-\bigwedge t^-\right]$.
We then have, for each $z\in\{c(k,l),d(k,l)\}$, 
$$
x_k\leq\left[\bigwedge s^+\right]\leq[z]\leq-[-z]
\leq-\left[\bigwedge t^-\right]\leq y^l.
$$
Set $\hat c(k,l)=[c(k,l)]$ and $\hat d(k,l)=[d(k,l)]$.

If $U$ is as case \eqref{inyl}, then choose $i\in t^+$ such that 
$\bigwedge(s^+\cup t^-)\leq i$. Next choose
$c(k,l)\in f\left(\bigwedge s^+\right)
\cap g\left(\left(-\bigwedge t^-\right)\vee i\right)$
and $d(k,l)\in g\left(\bigwedge s^+\right)
\cap f\left(\left(-\bigwedge t^-\right)\vee i\right)$ such that 
$c(k,l),d(k,l)\in\left[\bigwedge s^+,
\left(-\bigwedge t^-\right)\vee i\right]$.
We then have, for each $z\in\{c(k,l),d(k,l)\}$, 
$$
x_k\leq\left[\bigwedge s^+\right]\leq[z]
\leq\left[\left(-\bigwedge t^-\right)\vee i\right]
\leq\left(-\left[\bigwedge t^-\right]\right)\vee[i]\leq y^l.
$$
Above we used the fact that if $p,q\in B$, then
$[p\vee-q]\leq[p]\vee-[q]$, which follows from
$[p\vee-q]\wedge[q]=[p\wedge q]\leq[p]$.
Set $\hat c(k,l)=[c(k,l)]$ and $\hat d(k,l)=[d(k,l)]$.

If $U$ is as case \eqref{notinyl}, then $U\not\in\eta(x_k)$, so 
choose $i\in s^-$ such that $\bigwedge(s^+\cup t^-)\leq i$. 
Next choose $d(k,l)\in f\left(\bigwedge t^-\right)
\cap g\left(i\vee\left(-\bigwedge s^+\right)\right)$
and $c(k,l)\in g\left(\bigwedge t^-\right)
\cap f\left(i\vee\left(-\bigwedge s^+\right)\right)$ such that 
$d(k,l),c(k,l)\in\left[\bigwedge t^-,i\vee(-\bigwedge s^+)\right]$.
We then have, for each $z\in\{d(k,l),c(k,l)\}$, 
$$
x_k\leq\left[\bigwedge s^+\right]\wedge-[i]
=\left[\bigwedge s^+\right]\wedge-\left[i\vee(-\bigwedge s^+)\right]
\leq-[z]\leq-\left[\bigwedge t^-\right]\leq y^l.
$$
Above we used the fact that if $p,q,r\in B$ and $p\wedge q=p\wedge r$, 
then $[p]\wedge-[q]=[p]\wedge-[r]$ because $[p]\wedge[q]=[p]\wedge[r]$.
Set $\hat d(k,l)=-[d(k,l)]$ and $\hat c(k,l)=-[c(k,l)]$.

In all three cases,
$\hat c(k,l)\in F(x)\cap G(y)$, $\hat d(k,l)\in G(x)\cap F(y)$,
and $\hat c(k,l),\hat d(k,l)\in[x_k,y^l]$. Therefore, setting 
$$c=\bigvee_{k<m(x)}\bigwedge_{l<n(y)}\hat c(k,l)\text{ and }
d=\bigvee_{k<m(x)}\bigwedge_{l<n(y)}\hat d(k,l),$$ 
we have $c\in F(x)\cap G(y)$, $d\in G(x)\cap F(y)$, 
and $c,d\in[x,y]$. Thus, $(F,G)$ is a \lkfn\ map for $\exp(B)$.
\end{proof}

For an easy third preservation theorem, we next
extend Lemma~2.7 of~\cite{fks}, which says that the 
$\ka$-FN is preserved by retracts
in the category of partial orders 
(and therefore is preserved by retracts in the
subcategory of boolean algebras).

\begin{theorem}\label{retract}
Given infinite cardinals $\lm\geq\ka$ 
partial orders $P$, $Q$ where $P$ has the \lkfn,
and order-preserving maps $i\colon P\rightarrow Q$
and $j\colon Q\rightarrow P$ such that $j\circ i=id_P$,
the partial order $Q$ must also have the \lkfn.
In particular, retracts of boolean algebras preserve
the \lkfn.
\end{theorem}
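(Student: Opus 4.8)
The plan is to transport a \lkfn\ map across the retraction, which realizes the content flagged by the ``retracts preserve the \lkfn'' remark and extends Lemma~2.7 of~\cite{fks}. The identity $j\circ i=\id_P$ exhibits $P$ as a retract of $Q$ (with $i\circ j$ the associated idempotent on $Q$), and the interval requirement in the definition of a \lkfn\ map is exactly what pins down the direction of transport: interpolants are easy to move along order-preserving maps, but keeping them \emph{between} the given pair forces one to push interpolants found upstairs in $Q$ back down through $j$, where $j\circ i=\id_P$ restores the original endpoints. Accordingly, I would begin from a \lkfn\ map $(f,g)\colon Q\rightarrow[Q]^{<\lm}\times[Q]^{<\ka}$ for the ambient order and conjugate it by $i$ and $j$.

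For each $p\in P$ I would define $F(p)=j[f(i(p))]$ and $G(p)=j[g(i(p))]$. Because $j$ is single-valued, $\card{F(p)}\le\card{f(i(p))}<\lm$ and $\card{G(p)}\le\card{g(i(p))}<\ka$, so $(F,G)$ has the required type $P\rightarrow[P]^{<\lm}\times[P]^{<\ka}$. To check the two interpolation clauses, suppose $p\le q$ in $P$. Since $i$ is order-preserving, $i(p)\le i(q)$, so there exist $r\in f(i(p))\cap g(i(q))$ and $s\in g(i(p))\cap f(i(q))$ with $i(p)\le r\le i(q)$ and $i(p)\le s\le i(q)$. Applying the order-preserving map $j$ and then invoking $j\circ i=\id_P$ gives $p=j(i(p))\le j(r)\le j(i(q))=q$ and likewise $p\le j(s)\le q$, while by construction $j(r)\in F(p)\cap G(q)$ and $j(s)\in G(p)\cap F(q)$. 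Hence $(F,G)$ is a \lkfn\ map for $P$.

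The step that does all the work is the single use of $j\circ i=\id_P$ to collapse $j(i(p))$ and $j(i(q))$ back to $p$ and $q$; everything else is bookkeeping on images under $j$ of sets of size $<\lm$ (resp.\ $<\ka$). I expect no genuine obstacle beyond being careful that $j$ may be far from injective, so the cardinality statements for $F$ and $G$ are only inequalities, which is all that the definition requires. The boolean-algebra clause is then immediate: when $P$ and $Q$ are boolean algebras and $i,j$ are the retraction homomorphisms, these maps are in particular order-preserving, so the poset argument applies verbatim.
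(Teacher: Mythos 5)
Your construction is mathematically correct, and in substance it is the paper's own proof: the paper disposes of this theorem in one line by citing Lemma~2.7 of~\cite{fks}, whose argument is exactly your conjugation of a \lkfn\ pair by the retraction maps, with the single use of $j\circ i=\id_P$ collapsing $j(i(p))$ and $j(i(q))$ back to the endpoints, and the same observation that taking images under a (possibly non-injective) map can only shrink cardinalities.

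However, you should flag explicitly what you did silently: you proved the \emph{transpose} of the statement as printed. The theorem as worded assumes that $P$ (which is the retract, since $j\circ i=\id_P$) has the \lkfn\ and concludes that the ambient order $Q$ does; your proof instead starts from a \lkfn\ map $(f,g)$ on $Q$ and produces one on $P$. Your direction is the only true one: as literally printed the theorem is false. Indeed, let $P$ be a one-element poset (which trivially has the \lkfn), let $Q$ be arbitrary, let $i$ send the point anywhere in $Q$, and let $j$ be the constant map; then $j\circ i=\id_P$, yet the printed conclusion would endow every poset $Q$ with the \lkfn, contradicting Lemma~\ref{int}. The paper's printed witness $(i\circ f\circ j,\,i\circ g\circ j)$ suffers from the same transposition: for $x\le y$ in $Q$ it yields only $i(j(x))\le i(r)\le i(j(y))$, and nothing in the hypotheses places $i(j(x))$ and $i(j(y))$ inside the interval $[x,y]$. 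Read with the roles of $P$ and $Q$ exchanged --- which is what the gloss ``retracts of boolean algebras preserve the \lkfn'' and Lemma~2.7 of~\cite{fks} both intend --- the statement and its one-line proof coincide exactly with yours, maps $F=j\circ f\circ i$ and $G=j\circ g\circ i$ included. So: correct proof, essentially the same approach as the paper's intended one; just name the typo in the statement rather than quietly repairing it.
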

\begin{proof}
Exactly as argued in Lemma~2.7 of~\cite{fks},
given a \lkfn\ map $(f,g)$ for $P$,
$(i\circ f\circ j,i\circ g\circ j)$ is a
\lkfn\ map for $Q$.
\end{proof}

The next lemma and theorem are mostly 
due to the anonymous referee. 
The lemma yields a converse to the preservation 
by coproducts in Theorem~\ref{coprod}.
The theorem is a stronger
version of Theorem~\ref{hierarchy}.

\begin{lemma}\label{ksubalg}
If $P$ and $Q$ are partial orders, $\lm$ and $\ka$ are infinite
cardinals, $\ka$ is regular, $\lm\geq\ka$, $P\leq_\ka Q$,
and $Q$ has the \lkfn, then $P$ has the \lkfn\ too.
Hence, if a coproduct of boolean algebras has the \lkfn, then
so do all its cofactors.
\end{lemma}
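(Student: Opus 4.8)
The plan is to transfer a $(\lm,\ka)$-FN map from $Q$ to $P$ by composing it with the witnesses to $P\leq_\ka Q$. Let $(f,g)$ be a $(\lm,\ka)$-FN map for $Q$. For each $p\in P$, the set $Q\cap\downset p$ need not be what matters; rather, I want to push $f(p)$ and $g(p)$ down into $P$ in a way that respects the interpolation property. The key tool is that $P\leq_\ka Q$ gives, for each $q\in Q$, a set $E_q\in[P]^{<\ka}$ cofinal in $P\cap\downset q$ and a set $E'_q\in[P]^{<\ka}$ coinitial in $P\cap\upset q$. I would define, for $p\in P$,
\[
F(p)=\bigcup_{q\in f(p)}\bigl(E_q\cup E'_q\bigr),\qquad
G(p)=\bigcup_{q\in g(p)}\bigl(E_q\cup E'_q\bigr).
\]
Since $\card{f(p)}<\lm$, $\card{g(p)}<\ka$, and each $E_q,E'_q$ has size $<\ka\le\lm$, regularity of $\ka$ (and of $\lm$, or just $\lm\ge\ka$ with the cardinal arithmetic absorbing a $<\ka$-sized union of $<\ka$-sized sets) gives $F(p)\in[P]^{<\lm}$ and $G(p)\in[P]^{<\ka}$, as required.

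The heart of the argument is verifying interpolation. Suppose $p\le p'$ in $P$. Then $p\le p'$ holds in $Q$ as well (the ordering on $P$ is inherited), so there is $r\in f(p)\cap g(p')$ with $p\le r\le p'$ in $Q$. Now $p\in P\cap\downset r$, so by cofinality there is some $e\in E_r$ with $p\le e$; and $p'\in P\cap\upset r$, so by coinitiality there is some $e'\in E'_r$ with $e'\le p'$. Both $e$ and $e'$ lie in $F(p)\cap G(p')$ by construction, since $r\in f(p)\cap g(p')$. The delicate point is that a single interpolant $e$ (or $e'$) need not simultaneously satisfy $p\le e\le p'$ inside $P$; cofinality below $r$ only guarantees $p\le e$, not $e\le p'$. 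I would resolve this by observing that $r\le p'$ in $Q$ forces $e\le p'$ whenever $e\le r$, but the cofinal witness $e\in E_r$ satisfies $e\le r$ only if $E_r$ is taken to consist of lower bounds of $r$ — which is exactly the content of Definition~\ref{ksub}, since $E_r\subseteq P\cap\downset r$. Thus $p\le e\le r\le p'$, placing $e\in F(p)\cap G(p')\cap[p,p']$. The symmetric interpolant $s\in g(p)\cap f(p')$ is handled the same way, yielding a witness in $G(p)\cap F(p')\cap[p,p']$.

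The main obstacle I anticipate is precisely this bookkeeping: ensuring the cofinal/coinitial sets are chosen as genuine subsets of $\downset r$ and $\upset r$ (not merely cofinal in some abstract sense) so that transitivity closes the interpolation chain, and confirming the cardinality bounds survive the double union. Regularity of $\ka$ is essential here and is used exactly once, to keep $G(p)$ of size $<\ka$ when $g(p)$ has $<\ka$ members each contributing a $<\ka$-sized set. For the second sentence of the lemma, I would invoke the standard fact that each cofactor $B_i$ of a coproduct $B=\coprod_i B_i$ is a $\ka$-subalgebra of $B$ for every infinite $\ka$ (indeed an elementary, hence relatively complete, subalgebra), so that $B_i\leq_\ka B$; applying the first part with $P=B_i$ and $Q=B$ then gives the $(\lm,\ka)$-FN for each cofactor, providing the promised converse to Theorem~\ref{coprod}.
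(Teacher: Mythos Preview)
Your argument is correct and follows essentially the same route as the paper: pick a \lkfn\ map $(f,g)$ for $Q$, use the $\leq_\ka$-witnesses to push $f(p),g(p)$ into $P$, and interpolate via a cofinal element below the $Q$-interpolant $r$. The paper is slightly leaner---it uses only the cofinal sets $L_q\subseteq P\cap\downset q$ and never the coinitial ones, since the chain $p\le e\le r\le p'$ already closes without them---but your inclusion of $E'_q$ is harmless.

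One genuine slip: your parenthetical ``(indeed an elementary, hence relatively complete, subalgebra)'' is wrong on both counts. A cofactor $B_i$ is generally \emph{not} an elementary subalgebra of $\coprod_j B_j$ (easy first-order sentences about atoms distinguish them), and an elementary subalgebra is generally \emph{not} relatively complete. The fact you need, $B_i\leq_{\alo}B$, is nevertheless true and is exactly what the paper verifies directly by writing down $\max(B_i\cap\downset x)$ and $\min(B_i\cap\upset x)$ for arbitrary $x\in B$; you should cite or reproduce that computation rather than appeal to elementarity.
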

\begin{proof}
The proof of the first claim of the lemma is 
the essentially the same  as its $\ka$-FN analog, 
the proof of Lemma 2.3(a) of~\cite{fks}.
Let $(f,g)$ be a \lkfn\ map for $Q$. For each $q\in Q$, 
let $L_q\in[P]^{<\ka}$ be cofinal in $P\cap\downset q$. 
For each $p\in P$, set $F(p)=\bigcup\{L_q:q\in f(p)\}$
and $G(p)=\bigcup\{L_q: q\in g(q)\}$.
If $x,y\in P$ and $x\leq y$, then we have
$x\leq z\leq y$ for some $z\in f(x)\cap g(y)$
and $x\leq w\leq y$ for some $w\in g(x)\cap f(y)$.
Choose $u\in L_z$ such that $x\leq u$;
choose $v\in L_w$ such that $x\leq v$.
We then have $x\leq u\leq y$, $u\in F(x)\cap G(y)$,
$x\leq v\leq y$, and $v\in F(x)\cap G(y)$.
Thus, $(F,G)$ is a \lkfn\ map.

For the second claim of the Lemma, we just need the
 easy classical fact that
if $B=\coprod_{i\in I}B_i$ is a coproduct of boolean algebras,
then $B_i\leq_{\alo} B$ for all $i\in I$. 
(We include $B_i$ in $B$ in the canonical way.)
Given $j\in I$, we witness $B_j\leq_{\alo}B$
with the equations $\min(B_j\cap\upset x)=x^+$ and 
$\max(B_j\cap\downset x)=x^-$ where $x^+$ and $x^-$
are defined below using the notation from the proof of 
Theorem~\ref{coprod} to again express an arbitrary $x\in B$ as 
$\bigvee_{k<m(x)}\bigwedge_{i\in s(x,k)}a(x,k,i)$
and as $\bigwedge_{k<n(x)}\bigvee_{i\in t(x,k)}b(x,k,i)$.
\begin{align*}
x^+&=\bigvee_{k<m(x)}\bigwedge\bigl\{a(x,k,i):i\in s(x,k)\cap\{j\}\bigr\}\\
x^-&=\bigwedge_{k<n(x)}\bigvee\bigl\{b(x,k,i):i\in t(x,k)\cap\{j\}\bigr\}
\end{align*}
(Recall our convention that $\bigwedge\varnothing=1$ and $\bigvee\varnothing=0$.)
\end{proof}

\begin{theorem}\label{betterhierarchy}
Given regular infinite cardinals $\mu\geq\lm>\ka$, there
is a boolean algebra $B$ such that
\begin{enumerate}
\item $B$ has the \lkfn;
\item\label{kplpl} $B$ lacks the $(\lm',\ka')$-FN 
for all regular $\lm'<\lm$ and all regular $\ka'<\lm'$;
\item\label{kpkandlpm} $B$ lacks the $(\lm',\ka')$-FN 
for all regular $\ka'<\ka$ and all regular $\lm'\leq\mu$.
\end{enumerate}
\end{theorem}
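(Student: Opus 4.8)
The plan is to exhibit the required $B$ as a single coproduct whose cofactors are exactly the two families of examples already built: the interval algebras, which supply the ``first coordinate too small'' failures of Lemma~\ref{int}, and the tree algebra of Lemma~\ref{lsupp}, which supplies the ``second coordinate too small'' failures. Concretely, let $\Theta=\{\theta<\lm:\theta\text{ is a regular infinite cardinal}\}$, and for $\theta\in\Theta$ let $I_\theta$ be the interval algebra on $\theta$; let $B_0$ be the algebra of Lemma~\ref{lsupp} whose defining tree is ${}^{<\ka}\mu$ (so its two parameters are instantiated as $\mu$ and $\ka$). Set
$$ B \;=\; B_0 \,\coprod\, \coprod_{\theta\in\Theta} I_\theta. $$
The whole argument rests on pairing Theorem~\ref{coprod} (coproducts preserve the \lkfn) with its converse, the ``Hence'' clause of Lemma~\ref{ksubalg} (every cofactor of a coproduct with the \lkfn\ again has it, for regular second parameter). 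The converse is precisely what lets one algebra carry both kinds of failure simultaneously, because any \emph{success} of $B$ at some parameter pair must descend to each cofactor.

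For clause~(1) I would check that every cofactor has the \lkfn\ and then invoke Theorem~\ref{coprod}. Each $I_\theta$ has cardinality $\theta<\lm$, hence has the $(\theta^+,\alo)$-FN via the trivial map $x\mapsto(I_\theta,\{x\})$ noted before Lemma~\ref{int}; since $\theta^+\leq\lm$ and $\alo\leq\ka$, monotonicity gives $I_\theta$ the \lkfn. For $B_0$, since $\mu$ is regular and $\mu\geq\lm>\ka$ we have $\cf(\mu)=\mu>\ka$, so the hypothesis $\cf(\mu)\geq\ka$ of Lemma~\ref{lsupp} holds and $B_0$ has the $\ka$-FN, that is, the $(\ka,\ka)$-FN, whence the \lkfn\ by monotonicity. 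Thus all cofactors have the \lkfn\ and Theorem~\ref{coprod} yields clause~(1).

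Clauses~(2) and~(3) I would prove by contradiction using the converse in Lemma~\ref{ksubalg}: any cofactor $B_i$ satisfies $B_i\leq_{\alo}B$, and $\leq_{\alo}$ trivially implies $\leq_{\ka'}$, so whenever $B$ has the $(\lm',\ka')$-FN with $\ka'$ regular, so does $B_i$. For clause~(2), fix regular $\lm'<\lm$ and regular $\ka'<\lm'$ and suppose $B$ had the $(\lm',\ka')$-FN; then the cofactor $I_{\lm'}$, which is present since $\lm'\in\Theta$, would have it, contradicting Lemma~\ref{int}. For clause~(3), fix regular $\ka'<\ka$ and regular $\lm'\leq\mu$ (necessarily $\lm'\geq\ka'$, else the property is undefined) and suppose $B$ had the $(\lm',\ka')$-FN; then the cofactor $B_0$ would have it, and since $\lm'\leq\mu$ monotonicity would upgrade this to the $(\mu,\ka')$-FN, contradicting the second conclusion of Lemma~\ref{lsupp}.

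I do not expect a genuine obstacle here: once the coproduct is formed, the two lemmas and their converse do all the work. The only care needed is parameter bookkeeping---verifying $\cf(\mu)\geq\ka$ so that Lemma~\ref{lsupp} applies to $B_0$, checking the regularity of $\ka'$ and the inequality $\lm'\geq\ka'$ required by Lemma~\ref{ksubalg}, and applying monotonicity in the correct direction in clause~(3) (upward in the first coordinate, so as to land on the \emph{proved} failure at $(\mu,\ka')$ rather than at the unproved $(\lm',\ka')$). The conceptual point that makes this single-algebra statement possible, rather than the two separate witnesses of Theorem~\ref{hierarchy}, is exactly the availability of the converse Lemma~\ref{ksubalg}, which is why this strengthening is naturally grouped with that lemma.
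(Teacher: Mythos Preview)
Your proof is correct and follows essentially the same approach as the paper: form the coproduct of the tree algebra from Lemma~\ref{lsupp} (with parameters $\mu,\ka$) together with a family of interval algebras, then use Theorem~\ref{coprod} for clause~(1) and the converse in Lemma~\ref{ksubalg} for clauses~(2) and~(3). The only cosmetic difference is that the paper takes interval-algebra cofactors only for regular $\lm'$ with $\ka<\lm'<\lm$, whereas you take them for all regular $\theta<\lm$; your larger index set is harmless and makes clause~(2) slightly more direct, since the paper's smaller set forces the case $\lm'\leq\ka$ of clause~(2) to be absorbed into clause~(3).
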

\begin{proof}
By Lemma~\ref{lsupp}, there is a boolean algebra $A$ with
the $\ka$-FN that lacks the $(\mu,\ka')$-FN for all regular $\ka'<\ka$.
Hence, $A$ has the \lkfn\ and satisfies \eqref{kpkandlpm}.
As in the proof of Thoerem~\ref{hierarchy}, 
for every regular $\lm'\in(\ka,\lm)$, 
the interval algebra $B_{\lm'}$ on $\lm'$ has the \lkfn\ 
but lacks the $(\lm',\ka')$-FN for all regular $\ka'<\lm'$.
Set $B=\coprod\bigl(\{A\}\cup\{B_{\lm'}: \ka<\cf(\lm')=\lm'<\lm\}\bigr)$. 
By Theorem~\ref{coprod}, $B$ has the \lkfn.
By Lemma~\ref{ksubalg}, $B$ satisfies \eqref{kplpl} and \eqref{kpkandlpm}.
\end{proof}

\section{Elementary submodel characterizations}\label{elem}

The proofs of Lemmas \ref{int} and \ref{lsupp} strongly
suggest that the \lkfn\ can be better understood
using elementary substructures $M$ of sufficiently large 
fragments $(\bigh,\in,\ldots)$ of the universe of sets,
just as is the case for the $\ka$-FN and for openly
generated compacta.
Moreover, restricting our consideration to $M$ such that 
$M\cap\lm$ is an initial segment of $\lm$ seems reasonable.
Choosing such an $M$ was a crucial step of the proof of 
Lemma~\ref{int}. This section confirms these conjectures.

\begin{definition}[\cite{m}, 3.16]\label{deflong}
Assuming that  $\lm$ is a regular uncountable cardinal, $\theta$ is
a sufficiently large regular cardinal, and the structure
$(\bigh,\in,\ldots)$ has signature smaller than $\lm$ 
(in this paper the signature is always finite),
a long $\lm$-approximation sequence in $(\bigh,\in,\ldots)$
is a sequence $(M_\alpha)_{\alpha<\eta}\in\bigh$ such that
\begin{itemize}
\item $M_\alpha\elemsub(\bigh,\in,\ldots)$, 
\item $\card{M_\alpha}\subseteq\lm\cap M_\alpha\in\lm$, and
\item $\lm,(M_\beta)_{\beta<\alpha}\in M_\alpha$.
\end{itemize}
\end{definition}

Let $\Omega_\lm$ denote the tree of finite sequences of
ordinals $\la\xi_i\ra_{i<n}$
which are strictly decreasing in cardinality and are all, except
possibly the last element, at least $\lm$
(more compactly, $\lm\leq\card{\xi_i}>\card{\xi_j}$ for all $\{i<j\}\subseteq n$).
There is a unique isomorphism from the ordinals
to $\Omega_\lm$ ordered lexicographically,
and its preimage of $\subseteq\restrict\Omega_\lm$ 
is a $\{\lm\}$-definable tree ordering $\trileq_\lm$ of the ordinals
which agrees with $\leq$ and has the property that
$\{\beta:\beta\trileq_\lm\alpha\}$ is finite for every ordinal $\alpha$.
The next lemma gives a very nice application of $\trileq_\lm$.

\begin{lemma}[\cite{m}, 3.17]\label{longapp}
For every $\lm$-approximation sequence $(M_\alpha)_{\alpha<\eta}\in\bigh$
in $(\bigh,\in,\ldots)$, and every $\alpha\leq\eta$, 
if we enumerate $\{\beta:\beta\trileq_\lm\alpha\}$ 
as $\{\beta_0<\cdots<\beta_m\}$, then $\beta_0=0$, 
$\beta_m=\alpha$, and 
$N_i=\bigcup\{M_\gamma:\beta_i\leq\gamma<\beta_{i+1}\}$
satisfies $\card{N_i}\subseteq N_i\elemsub(\bigh,\in,\ldots)$
for each $i<m$.
\end{lemma}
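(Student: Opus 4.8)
The plan is to recast the $N_i$ as instances of a single ``append interval'' construction and to prove the required property for all such intervals by induction on an ordinal that strictly decreases through the recursion. Write $\Phi\colon\ord\to\Omega_\lm$ for the lexicographic isomorphism, so that $\gamma\trileq_\lm\gamma'$ holds iff $\Phi(\gamma)$ is an initial segment of $\Phi(\gamma')$. The statements $\beta_0=0$ and $\beta_m=\alpha$ are immediate: the empty sequence is both $\subseteq$-below and lexicographically-below every element of $\Omega_\lm$, so $\Phi^{-1}(\la\ra)=0\trileq_\lm\alpha$, while $\Phi(\alpha)$ is its own longest prefix. Since $\Phi(\beta_{i+1})=\Phi(\beta_i)^\frown\la\xi_i\ra$ where $\la\xi_0,\dots,\xi_{m-1}\ra=\Phi(\alpha)$, each $N_i$ is exactly $N(\Phi(\beta_i),\xi_i)$, where for $p\in\Omega_\lm$ and an ordinal $b$ with $p^\frown\la b\ra\in\Omega_\lm$ I set $N(p,b)=\bigcup\{M_\gamma:\Phi^{-1}(p)\le\gamma<\Phi^{-1}(p^\frown\la b\ra)\}$. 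So it suffices to prove, for every such $(p,b)$, that $N(p,b)\elemsub\bigh$ and $\card{N(p,b)}\subseteq N(p,b)$.

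Two elementary-submodel facts will be used repeatedly. First (absorption), since $(M_\beta)_{\beta<\gamma'}$ and $\lm$ lie in $M_{\gamma'}$ and $\card{M_\gamma}<\lm$ for all $\gamma$, whenever $\gamma\in M_{\gamma'}$ with $\gamma<\gamma'$ we get $M_\gamma\in M_{\gamma'}$, then $\card{M_\gamma}\in\lm\cap M_{\gamma'}$, hence $\card{M_\gamma}\subseteq M_{\gamma'}$, and finally $M_\gamma\subseteq M_{\gamma'}$ by enumerating $M_\gamma$ inside $M_{\gamma'}$; more generally, any set $X\in M_{\gamma'}$ with $\card{X}\subseteq M_{\gamma'}$ satisfies $X\subseteq M_{\gamma'}$. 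Second (a Case~A computation), when the last entry $c$ of $\Phi(\gamma')$ satisfies $c<\lm$, the set of ordinals from the immediate $\trile_\lm$-predecessor of $\gamma'$ (inclusive) up to $\gamma'$ (exclusive) is a block of $1+c$ consecutive ordinals all lying in $M_{\gamma'}$, because $c$ is definable and $<\lm$, whence $c\in\lm\cap M_{\gamma'}$ and the enumeration of the block belongs to $M_{\gamma'}$.

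I would run the induction on the right endpoint $\Phi^{-1}(p^\frown\la b\ra)$; every piece entering the recursion has strictly smaller right endpoint, so the induction is well-founded. If $b$ is a limit, then $N(p,b)=\bigcup_{\zeta<b}N(p,\zeta)$ is an increasing union of elementary submodels (each good by induction), hence elementary; cardinality closure follows because the sizes $\card{N(p,\zeta)}\ge\card{\zeta}$ are cofinal in $\card{N(p,b)}$ and each $N(p,\zeta)$ is cardinality-closed. If $b=b_0+1$ with $b_0<\lm$, then the new block is the single node $\sigma=\Phi^{-1}(p^\frown\la b_0\ra)$, the Case~A computation places every earlier index in $M_\sigma$, so absorption gives $N(p,b_0)\subseteq M_\sigma$ and hence $N(p,b)=M_\sigma$, which is already good. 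The substantive case is $b=b_0+1$ with $b_0\ge\lm$.

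In that case $N(p,b)=N(p,b_0)\cup N'$, where $N'=\bigcup_{\xi<\card{b_0}}N(p^\frown\la b_0\ra,\xi)$ is the union over the subtree hanging below $p^\frown\la b_0\ra$. Here $N(p,b_0)$ and $N'$ are $\subseteq$-incomparable and no single model $M_\gamma$ contains representatives of both, so the union is not visibly elementary; this is the main obstacle. I would resolve it as follows. By induction $N'$ is an increasing union of good submodels, hence elementary, and a short cardinal computation gives $\card{N'}=\card{b_0}=\card{N(p,b_0)}$ with $N'$ cardinality-closed, so $\card{N(p,b_0)}\subseteq N'$. Since $N(p,b_0)$ is definable from $p$, $b_0$, $\lm$ and the model sequence, all of which lie in $N'\elemsub\bigh$, we have $N(p,b_0)\in N'$; the general absorption fact then yields $N(p,b_0)\subseteq N'$. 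Thus $N(p,b)=N'$ is good, completing the induction. The delicate points are the two cardinal computations $\card{N(p,b_0)}=\card{b_0}=\card{N'}$ (so that the previous stage is exactly small enough to be swallowed) and the verification that the appended coordinate is ``absorbed'' into an initial segment, so that cardinality closure is inherited at the limit and block stages; these are precisely where regularity of $\lm$ and the exact shape of $\Omega_\lm$ are used.
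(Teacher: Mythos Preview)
The paper does not prove this lemma; it is quoted from \cite{m} (Lemma~3.17 there) and used as a black box in the proof of Theorem~\ref{equiv}. So there is no in-paper argument to compare against, and I can only assess your proof on its own terms.

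Your argument is essentially correct. The reduction to the ``append interval'' sets $N(p,b)$ and the transfinite induction on the right endpoint $\Phi^{-1}(p^\frown\la b\ra)$ is the right organizing idea, and the three cases are handled properly: in the limit case the chain argument works; in the successor case with $b_0<\lm$ the Case~A block computation plus absorption collapses everything into the single top model $M_\sigma$; and in the substantive successor case $b_0\geq\lm$ your key move---showing $N(p,b_0)\in N'$ with $\card{N(p,b_0)}=\card{b_0}\subseteq N'$, so absorption swallows $N(p,b_0)$ into $N'$---is sound. The cardinality computations $\card{N(p,b_0)}=\card{b_0}=\card{N'}$ hold because the relevant ordinal intervals each have cardinality $\card{b_0}$ (a short induction on the subtree sizes in $\Omega_\lm$ gives this) and each $M_\gamma$ contributes its own index $\gamma$ to the union.

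Two small points to tighten. First, you omit the base case $b=0$, where $N(p,0)=M_{\Phi^{-1}(p)}$ is a single model and hence trivially good; this case is needed since your limit and $b_0\geq\lm$ recursions descend to it. Second, when you write that $N(p,b_0)$ is definable from ``the model sequence'' in $N'$, the full sequence $(M_\gamma)_{\gamma<\eta}$ need not lie in $N'$; what you actually use (and what suffices) is the initial segment $(M_\gamma)_{\gamma<\gamma_0}$ with $\gamma_0=\Phi^{-1}(p^\frown\la b_0\ra)$, which is in $M_{\gamma_0}\subseteq N'$ by the approximation-sequence axioms.
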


\begin{theorem}\label{equiv}
Given regular infinite cardinals $\lm>\ka$, a partial order $P$,
and a sufficiently large regular cardinal $\theta$,
the following are equivalent.
\begin{enumerate}
\item\label{bigmodel} $P\cap M\leq_\ka P$ for all 
$M\elemsub(\bigh,\in,\leq_P)$ that satisfy 
$M\cap\lm\in\lm+1$.
\item\label{bigmodelcard} $P\cap M\leq_\ka P$ for all 
$M\elemsub(\bigh,\in,\leq_P)$ that satisfy 
$\card{M}\cap\lm\subseteq M\cap\lm\in\lm+1$.
\item\label{smallmodel} $P\cap M\leq_\ka P$ for all 
$M\elemsub(\bigh,\in,\leq_P)$ that satisfy $M\cap\lm\in\lm$.
\item\label{smallmodelcard} $P\cap M\leq_\ka P$ for all 
$M\elemsub(\bigh,\in,\leq_P)$ that satisfy 
$\card{M}\subseteq M\cap\lm\in\lm$.
\item\label{club} $A\leq_\ka P$ for all $A$ in some club 
$\mcE\subseteq[P]^{<\lm}$.
\item\label{fnmap} $P$ has the \lkfn.
\end{enumerate}
\end{theorem}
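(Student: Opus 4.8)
The plan is to run the cycle of implications (\ref{fnmap}) $\Rightarrow$ (\ref{bigmodel}) $\Rightarrow$ (\ref{smallmodel}) $\Rightarrow$ (\ref{smallmodelcard}) $\Rightarrow$ (\ref{club}) $\Rightarrow$ (\ref{fnmap}), together with the trivial chain (\ref{bigmodel}) $\Rightarrow$ (\ref{bigmodelcard}) $\Rightarrow$ (\ref{smallmodelcard}). Four of these arrows are immediate from the containments among the families of $M$ being quantified over: strengthening the constraint on $M$ shrinks the class to be checked, so (\ref{bigmodel}) implies each of (\ref{bigmodelcard}) and (\ref{smallmodel}), and each of these implies (\ref{smallmodelcard}). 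Thus the real content lies in the three implications (\ref{fnmap}) $\Rightarrow$ (\ref{bigmodel}), (\ref{smallmodelcard}) $\Rightarrow$ (\ref{club}), and (\ref{club}) $\Rightarrow$ (\ref{fnmap}).

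For (\ref{fnmap}) $\Rightarrow$ (\ref{bigmodel}) I would argue as follows. Fix $M\elemsub(\bigh,\in,\leq_P)$ with $M\cap\lm\in\lm+1$; we may assume $P,\lm,\ka\in M$, so by elementarity $M$ contains a \lkfn\ map $(f,g)$. The key observation is that $f$ and $g$ send points of $M$ into $M$: if $q\in P\cap M$ then $f(q)\in M$ and $\card{f(q)}<\lm$, so the ordinal $\card{f(q)}$ lies in $M\cap\lm$ and hence is contained in $M$; applying a bijection $\card{f(q)}\to f(q)$ lying in $M$ gives $f(q)\subseteq M$, and likewise $g(q)\subseteq M$. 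Now for arbitrary $p\in P$ and any $s\in P\cap M\cap\downset p$, the map yields $r\in f(s)\cap g(p)$ with $s\leq r\leq p$; here $r\in f(s)\subseteq M$, so $r\in g(p)\cap M\cap\downset p$ and $r\geq s$. Hence $g(p)\cap M\cap\downset p$ is cofinal in $P\cap M\cap\downset p$ and has size $<\ka$; the dual argument bounds the coinitiality of $P\cap M\cap\upset p$. Therefore $P\cap M\leq_\ka P$, and this works uniformly whether $M\cap\lm$ is a proper initial segment or all of $\lm$.

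The implication (\ref{smallmodelcard}) $\Rightarrow$ (\ref{club}) is a routine reflection argument. I would show that $\mcE=\{P\cap M:M\elemsub(\bigh,\in,\leq_P),\ \card M\subseteq M\cap\lm\in\lm\}$ contains a club in $[P]^{<\lm}$; this is standard bookkeeping using the regularity of $\lm$ to build, over any prescribed $x\in[P]^{<\lm}$, an elementary chain whose union $M$ keeps $M\cap\lm$ an initial segment that outruns $\card M$. By (\ref{smallmodelcard}) every member of $\mcE$ is a $\ka$-subalgebra, which is exactly (\ref{club}).

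The hard part is (\ref{club}) $\Rightarrow$ (\ref{fnmap}): manufacturing an actual \lkfn\ map from a club $\mcE$ of $\ka$-subalgebras. (When $\card P<\lm$ this is trivial, since $f(p)=P$ and $g(p)=\{p\}$ works, so assume $\card P\geq\lm$.) I would fix a long $\lm$-approximation sequence $(M_\alpha)_{\alpha<\delta}$ in a structure coding $\mcE$, with $P\subseteq\bigcup_\alpha M_\alpha$; because $\mcE$ is club, each reduct $P\cap M_\alpha$ is a directed union of fewer than $\lm$ members of $\mcE$ and so again lies in $\mcE$, hence is a $\ka$-subalgebra. Assign to each $p$ its first appearance stage $r(p)$ and set $f(p)=P\cap M_{r(p)}$, which has size $<\lm$ and contains $p$. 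To define $g(p)$ I would record, over only the \emph{finitely many} stages $\beta\trileq_\lm r(p)$, a cofinal subset of $(P\cap M_\beta)\cap\downset p$ and a coinitial subset of $(P\cap M_\beta)\cap\upset p$, each of size $<\ka$ by the $\ka$-subalgebra property; the finiteness of $\{\beta:\beta\trileq_\lm r(p)\}$ keeps $\card{g(p)}<\ka$. The main obstacle is the interpolation check for a comparable pair $p\leq q$: one must show that, although $r(p)$ and $r(q)$ may be wildly separated, the $<\ka$ witnesses attached to their common $\trileq_\lm$-ancestors already contain an interpolant lying simultaneously in $f(p)$ and $g(q)$ (and dually in $g(p)$ and $f(q)$). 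This is exactly where I expect to lean on Lemma~\ref{longapp}: passing to the $\trileq_\lm$-meet of $r(p)$ and $r(q)$ and to the intermediate block models $N_i$, which are genuine elementary submodels with $\card{N_i}\subseteq N_i$ and whose coherence lets each one locate the earlier data, I would route the interpolation through finitely many canonical stages so that a witness chosen at the meet stage lands in both coordinates. Verifying that this routing closes up correctly — that finitely many small $\ka$-subalgebras genuinely suffice to interpolate an arbitrary comparable pair — is the crux of the whole theorem, and is the reason the tree order $\trileq_\lm$ with its finite-predecessor property was set up beforehand.
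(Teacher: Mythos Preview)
Your route differs from the paper's and has two genuine gaps.

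The serious one is in \eqref{club}$\Rightarrow$\eqref{fnmap}. Your non-recursive $g(p)$ stores cofinal and coinitial witnesses only inside the individual models $M_\beta$ for $\beta\trileq_\lm r(p)$, but interpolation has to pass through the \emph{block} models $N_j$ of Lemma~\ref{longapp}. Concretely, suppose $p\leq q$ with $r(p)<r(q)$ and $r(p)\not\trileq_\lm r(q)$, so that $r(p)$ lies in some interval $[\beta_j,\beta_{j+1})$ of the decomposition for $r(q)$. Your $g(q)$ contains a cofinal subset of $P\cap M_{\beta_j}\cap\downset q$, and that subset does land in $f(p)=P\cap M_{r(p)}$ since $M_{\beta_j}\subseteq M_{r(p)}$; but $p$ itself need not lie in $M_{\beta_j}$, so nothing in that cofinal set need dominate $p$. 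Passing to the $\trileq_\lm$-meet does not help, for the same reason. The paper's construction is instead \emph{recursive}: it chooses $A_j\in[P\cap N_j]^{<\ka}$ cofinal and coinitial for the new $x$ in the full block $N_j$, and sets $f(x)\supseteq\bigcup_{z\in A_j}f(z)$ and $g(x)\supseteq\bigcup_{z\in A_j}g(z)$, so that interpolation unwinds inductively through earlier stages. But choosing such $A_j$ requires $P\cap N_j\leq_\ka P$, and once the sequence has length $\geq\lm$ the blocks $N_j$ can have size $\geq\lm$, so the club hypothesis \eqref{club} does not reach them. This is exactly why the paper runs the long $\lm$-approximation construction from \eqref{bigmodelcard}, not from \eqref{club}, and why the real work of the theorem is placed in \eqref{club}$\Rightarrow$\eqref{bigmodel}.

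The second gap is in \eqref{fnmap}$\Rightarrow$\eqref{bigmodel}: you may not assume $\lm,\ka\in M$. Condition \eqref{bigmodel} quantifies over all $M\elemsub(\bigh,\in,\leq_P)$ with $M\cap\lm\in\lm+1$, and neither $\lm$ nor $\ka$ is in the signature. Elementarity only hands you some $(\lm',\ka')$-FN map in $M$ for some $\lm',\ka'\in M$, and arranging $\lm'\leq\lm$ and $\ka'\leq\ka$ takes a genuine case analysis (is $\ka\in M$? is $\lm\in M$? if neither, then $M\cap\lm\leq\ka$, and one must argue further about the least ordinal of $M$ above $\ka$). The paper carries out exactly this analysis in its proof of \eqref{club}$\Rightarrow$\eqref{bigmodel}; it is not a formality, and your argument as written does not cover models missing these parameters.
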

\begin{remark}\ 
\begin{itemize}
\item In general, for regular $\lm$ and $M\elemsub(\bigh,\in,\ldots)$, 
$M\cap\lm\in\lm+1$ iff $[\bigh]^{<\lm}\cap M\subseteq[M]^{<\lm}$.
\item Item~\eqref{bigmodelcard} in the theorem is 
the original definition of the \lkfn\ from~\cite{m} (4.1).\
\end{itemize}
\end{remark}
\begin{proof}
We will prove the following two overlapping circles of implications.
\begin{align*}
\eqref{bigmodel}\Rightarrow
\eqref{smallmodel}\Rightarrow
\eqref{smallmodelcard}\Rightarrow
\eqref{club}&\Rightarrow
\eqref{bigmodel}
\\
\eqref{club}&\Rightarrow
\eqref{bigmodel}\Rightarrow
\eqref{bigmodelcard}\Rightarrow
\eqref{fnmap}\Rightarrow
\eqref{club}
\end{align*}

$\eqref{bigmodel}\Rightarrow
\eqref{smallmodel}\Rightarrow
\eqref{smallmodelcard}$: Trivial.

$\eqref{smallmodelcard}\Rightarrow\eqref{club}$:
If $\card{P}<\lm$, then $\mcE=\{P\}$ works. So, assume $\card{P}\geq\lm$.
Choose a well-ordering $\sqsubseteq$ of $\bigh$; now 
$(\bigh,\in,\leq_P,\sqsubseteq)$ has a Skolem hull operator $\Hull$.
Let $\mcE$ be the set of all $A\in[P]^{<\lm}$ 
that satisfy $\card{A}\subseteq \Hull(A)\cap\lm\in\lm$ 
and $A=P\cap\Hull(A)$. 
Since $\lm$ is regular, $\mcE$ is closed
with respect to ascending unions of length less than $\lm$. 
Moreover, starting from any $X_0\in[P]^{<\lm}$, we can expand $X_0$
to $X=\bigcup_{n<\om}X_n$ where 
$X_{n+1}=P\cap\Hull(X_n\cup\card{X_n}\cup\sup(\lm\cap \Hull(X_n)))$.
We have $X\in\mcE$ because there are Skolem terms for $\varphi$ and 
$\inv{\varphi}$ where $\varphi$ is the $\sqsubseteq$\nbd-least 
injection from $\lm$ to $P$. Thus, $\mcE$ is unbounded.
Finally, if $A\in\mcE$, then 
$\Hull(A)\elemsub(\bigh,\in,\leq_P,\sqsubseteq)$, so  
\eqref{smallmodelcard} implies that $A=P\cap \Hull(A)\leq_\ka P$.

$\eqref{club}\Rightarrow\eqref{bigmodel}$: 
Assume $\mcE\subseteq[P]^{<\lm}$ is as in \eqref{club},
$M\elemsub(\bigh,\in,\leq_P)$, and $M\cap\lm\in\lm+1$.
We want to show that $P\cap M\leq_\ka P$.
By elementarity, there exist $\lm',\ka',\mcE'\in M$ 
such that $\lm'>\ka'$ are regular infinite cardinals, 
$\mcE'$ is a club subset of $[P]^{<\lm'}$,
and $A\leq_{\ka'}P$ for all $A\in\mcE'$.
First, we show that we can choose $\lm'\leq\lm$ and $\ka'\leq\ka$.
If $\ka\in M$, then we can choose $\ka'=\ka$ and
next choose $\lm'$ to be least possible,
which guarantees $\lm'\leq\lm$.
If $\lm\in M$, then we can choose $\lm'=\lm$ and
next choose $\ka'$ to be least possible,
which guarantees $\ka'\leq\ka$.
Suppose we are in the remaining case, that
$M\cap\lm\leq\ka$ and $\lm\not\in M$.
If $\theta\cap M=\ka$, then $\ka'<\lm'<\ka<\lm$.
If $\mu=\min(\theta\cap M\setminus\ka)$, then
$\mu>\lm$, so we can choose $\ka'<\lm'<\mu$, so
again $\ka'<\lm'<\ka<\lm$.

Now, safely assuming that $\lm'\leq\lm$ and $\ka'\leq\ka$,
it suffices to show that $P\cap M\leq_{\ka'}P$. Fix $q\in P$. 
By symmetry, we need only prove that $\cf(M\cap\downset q)<\ka'$.
Set $\chi=\left(2^{<\theta}\right)^+$ and choose $N_0$ such that
$N_0\elemsub(H(\chi),\in,\leq_P,q,M,\ka',\mcE')$.
Since $\lm'\leq\lm$, we have $M\cap\lm'\in\lm'+1$, 
so we may choose $N_0$ such that 
$\card{N_0}\subseteq N_0\cap\lm'\in\lm'$
by taking $N_0$ to be the union of an appropriate elementary chain.
By elementarity, $M\cap N_0\elemsub(\bigh,\in,\leq_P,\mcE')$ 
and $M\cap N_0\cap\lm'=N_0\cap\lm'\in\lm'$.
Hence, $P\cap M\cap N_0\in\mcE'$; hence, $P\cap M\cap N_0\leq_{\ka'} P$.
Let $D_0$ be a cofinal subset of 
$M\cap N_0\cap\downset q$ of size less than $\ka'$.

Starting with $N_0$, we now form a continuous elementary chain
of length $\ka'+1$.
Given $\alpha<\ka'$ and $(N_\alpha,D_\alpha)$ having
the above properties of $(N_0,D_0)$, choose $(N_{\alpha+1}, D_{\alpha+1})$
to have the above properties of $(N_0,D_0)$ while also
satisfying $(D_i)_{i\leq\alpha}\in N_{\alpha+1}$ and $D_\alpha\subseteq D_{\alpha+1}$.
At limit stages $\alpha\leq\ka'$, let $N_\alpha=\bigcup_{i<\alpha}N_i$
and $D_\alpha=\bigcup_{i<\alpha}D_i$, noting
that the above properties of $(N_0,D_0)$ are preserved by
unions of chains of length at most $\ka'$, except
possibly that $\card{D_{\ka'}}=\ka'$. 
In particular, $P\cap M\cap N_{\ka'}\in\mcE'$, so
$\cf(M\cap N_{\ka'}\cap\downset q)<\ka'$. 
Hence, $D_\alpha$ is cofinal in $M\cap N_{\ka'}\cap\downset q$ 
for some $\alpha<\ka'$.
Since $D_\alpha\in N_{\ka'}$, $D_\alpha$ is, by elementarity, 
also cofinal in $M\cap\downset q$.
Thus, $\cf(M\cap\downset q)<\ka'$ as desired.

$\eqref{bigmodel}\Rightarrow\eqref{bigmodelcard}$: Trivial.

$\eqref{bigmodelcard}\Rightarrow\eqref{fnmap}$:
Let $\card{P}=\mu\geq\lm$ and let $(M_i)_{i<\mu}$
be a long $\lm$-approximation sequence in 
$(\bigh,\in,\leq_P)$.
Observe that $P\subseteq\bigcup_{i<\mu}M_i$.
Assuming that $i<\mu$ and we have defined 
$(f,g)\restrict\left(P\cap\bigcup_{j<i}M_j\right)$
to be a \lkfn\ map on $P\cap\bigcup_{j<i}M_j$, 
it suffices to extend the definition 
to get a \lkfn\ map on $P\cap\bigcup_{j\leq i}M_j$.

Let $N_0,\ldots,N_{m-1}$ be as in Lemma~\ref{longapp};
observe that $\bigcup_{j<i}M_j=\bigcup_{j<m}N_j$.
Given $x\in P\cap M_i\setminus\bigcup_{j<m}N_j$,
let $A_j\in[P\cap N_j]^{<\ka}$ include a cofinal
subset of $N_j\cap\downset x$ and a coinitial
subset of $N_j\cap\upset x$, for each $j<m$.
Finally, set $f(x)=(P\cap M_i)\cup\bigcup_{j<m}\bigcup\{f(z):z\in A_j\}$
and $g(x)=\{x\}\cup\bigcup_{j<m}\bigcup\{g(z):z\in A_j\}$.
It is easy to check that this definition works.
For example, if $x\leq y\in P\cap N_j$, 
then $x\leq z\leq y$ for some $z\in A_j$;
since $z,y\in N_j$, 
we have $z\leq w\leq y$ for some $w\in f(z)\cap g(y)$;
since $f(z)\subseteq f(x)$, 
we have $x\leq w\leq y$ and $w\in f(x)\cap g(y)$.

$\eqref{fnmap}\Rightarrow\eqref{club}$: 
Let $(f,g)$ be a \lkfn\ map.
Let $\mcE$ be the club set of all $X\in[P]^{<\lm}$ 
satisfying $X\supseteq\bigcup f[X]$. Fix $A\in\mcE$.
For each $p\in P$ and $q\in A\cap\upset p$,
there exists $r\in g(p)\cap f(q)$ such that $p\leq r\leq q$.
Since $f(q)\subseteq A$, $g(p)\cap A$ includes a coinitial subset of 
$M\cap\upset p$; by symmetry, $g(p)\cap A$ includes a cofinal subset of 
$M\cap\downset p$. Since $\card{g(p)}<\ka$, $A\leq_\ka P$.
\end{proof}

\begin{corollary}\label{kkpk}
For all regular infinite $\ka$, the $\ka$-FN is equivalent to the $(\ka^+,\ka)$-FN.
\end{corollary}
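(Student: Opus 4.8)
The plan is to prove the two implications separately; the content is entirely in the backward direction. The forward implication $\ka$-FN $\Rightarrow(\ka^+,\ka)$-FN is pure monotonicity: the $\ka$-FN is the $(\ka,\ka)$-FN (a $\ka$-FN map $h$ yields the pair $(h,h)$), and the $(\ka,\ka)$-FN implies the $(\ka^+,\ka)$-FN by the monotonicity remark following Definition~\ref{lkfn}, since $\ka\leq\ka^+$ and $\ka\leq\ka$.

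For the converse, I would fix a $(\ka^+,\ka)$-FN map $(f,g)\colon P\to[P]^{<\ka^+}\times[P]^{<\ka}$ and verify the elementary-submodel criterion for the $\ka$-FN quoted in the introduction (the partial-order form of the Fuchino--Koppelberg--Shelah characterization): it is enough that $P\cap M\leq_\ka P$ for all $M\elemsub(\bigh,\in,\leq_P,f,g)$ with $\card{M}=\ka$ and $\ka\subseteq M$. Fix such an $M$ and any $q\in P$; by symmetry I treat only $\cf(P\cap M\cap\downset q)$. Suppose for contradiction that this cofinality is $\ka$, and choose an increasing cofinal sequence $(a_\xi)_{\xi<\ka}$ in $P\cap M\cap\downset q$, whose terms automatically lie in $M$. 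For each $\xi$ I interpolate the relation $a_\xi\leq q$ on the small side: pick $r_\xi\in f(a_\xi)\cap g(q)$ with $a_\xi\leq r_\xi\leq q$. Because $\card{g(q)}<\ka$ and $\ka$ is regular, some fixed $r^\ast\in g(q)$ equals $r_\xi$ for a cofinal set of $\xi$, and since the $a_\xi$ increase this yields $a_\xi\leq r^\ast\leq q$ for every $\xi$. Fixing one such index $\xi_0$, we have $r^\ast\in f(a_{\xi_0})$; as $a_{\xi_0}\in M$, $f\in M$, $\card{f(a_{\xi_0})}\leq\ka$ and $\ka\subseteq M$, elementarity forces $f(a_{\xi_0})\subseteq M$ and hence $r^\ast\in M$. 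Then $r^\ast$ is a maximum of $P\cap M\cap\downset q$, contradicting that this set has cofinality $\ka$. The dual bound on $\ci(P\cap M\cap\upset q)$ is obtained identically, interpolating $q\leq a_\xi$ and taking the interpolant in $g(q)\cap f(a_\xi)$.

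The step I expect to require the most care is the interface with the Fuchino--Koppelberg--Shelah criterion. That criterion ranges over all models of size $\ka$ containing $\ka$, whereas the domination argument above needs $f$ (and $g$) to belong to $M$ so that $r^\ast$ can be reflected back into $M$. I would reconcile this by observing that the models used form a club in $[\bigh]^\ka$, so that it suffices to know the backward direction of the criterion holds for a club of $\ka$-subalgebras; concretely, one simply adds $(f,g)$ to the structure before applying the criterion. The remaining ingredients---the pigeonhole collapse by regularity of $\ka$ and the elementarity computation $f(a_{\xi_0})\subseteq M$---are routine. It is worth noting that this route does not invoke Theorem~\ref{equiv} at all; alternatively, one could run the same reflection through item~\eqref{smallmodel} of that theorem.
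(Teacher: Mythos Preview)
Your argument is correct and parallels the paper's primary proof, which simply cites Proposition~3.1 of~\cite{fks} (the club characterization of the $\ka$-FN) together with Theorem~\ref{equiv}; your direct verification that $P\cap M\leq_\ka P$ is in effect the implication $\eqref{fnmap}\Rightarrow\eqref{club}$ of that theorem specialized to $\lm=\ka^+$, and your handling of the ``must $f,g\in M$?'' issue is exactly the club reduction. The paper additionally sketches a self-contained alternative that avoids citing~\cite{fks}: using $\lm=\ka^+$, it modifies the transfinite recursion in the proof of $\eqref{bigmodelcard}\Rightarrow\eqref{fnmap}$ to build a single $\ka$-FN map directly, setting $f(x)=\{z:z\sqsubseteq x\}\cup\bigcup_{j<m}\bigcup f[A_j]$ where $\sqsubseteq$ well-orders $P\cap M_i$ in type at most $\ka$. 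Your route is more elementary and avoids the long $\lm$-approximation machinery; the paper's alternative buys independence from the external reference. Incidentally, your contradiction-and-pigeonhole step can be shortened: for each $a\in M\cap\downset q$ the interpolant in $f(a)\cap g(q)$ already lies in $M$ since $\card{f(a)}\leq\ka$ forces $f(a)\subseteq M$, so $g(q)\cap M\cap\downset q$ is cofinal of size less than $\ka$ without further argument.
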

\begin{proof}
By Proposition~3.1 of~\cite{fks}, for all regular infinite $\ka$,
the $\ka$-FN is equivalent to item~\eqref{club} of Theorem~\ref{equiv}
with $\lm=\ka^+$.  

For a self-contained proof, first note that 
the $(\ka,\ka)$-FN trivially implies the $(\ka^+,\ka)$-FN, 
so we just need to modify the proof of 
$\eqref{bigmodelcard}\Rightarrow\eqref{fnmap}$ of Theorem~\ref{equiv}
so as to use an additional hypothesis $\lm=\ka^+$ 
to produce a $\ka$-FN map.
To accomplish this, replace the inductive definitions of $f$ and $g$
with a similar inductive definition of $f$ alone where now
$f(x)=\{z: z\sqsubseteq x\}\cup\bigcup_{j<m}\bigcup\{f(z):z\in A_j\}$
where $\sqsubseteq$ is a well-ordering of $P\cap M_i$
of length at most $\ka$.
\end{proof}

\section{Consequences for compacta}\label{cpct}

Let us review a few basic facts about elementary quotients.
Assume $X$ is a compactum and $M\elemsub\la\bigh,\in,\mcT_X\ra$.
We then have the following.
\begin{itemize}
\item $\pi^X_M(p)=\pi^X_M(q)$ iff $f(p)=f(q)$ for all 
continuous real-valued functions $f\in M$.
\item If $U\in M$ is functionally open, then $\pi^X_M[U]$ is functionally open and
$U=\inv{\left(\pi^X_M\right)}\left[\pi^X_M[U]\right]$.
\item Moreover, $X/M$ has a base consisting of sets of
the form $\pi^X_M[U]$ where $U$ is functionally open and $U\in M$.
\item $\pi^X_M[X\cap M]$ is dense in $X/M$.
\end{itemize}

Lemma~4.11 of~\cite{m} shows that Bandlow's characterization of ``openly generated,''
that $\pi^X_M$ is open for a club of $M\in[\bigh]^{<\al_1}$,
is equivalent to $\pi^X_M$ being open for all $M\elemsub(\bigh,\in,\mcT_X)$,
provided $\theta$ is sufficiently large.
(We will prove a more general result in Theorem~\ref{ogequiv}.)
Given an arbitrary regular uncountable cardinal $\lm$,
a simple generalization of Bandlow's property is to declare a compactum to be
\laog\ if $\pi^X_M$ is open for a club of $M\in[\bigh]^{<\lm}$,
which is likewise equivalent to $\pi^X_M$ being open 
for all $M\elemsub(\bigh,\in,\mcT_X)$ satisfying $M\cap\lm\in\lm+1$, 
provided $\theta$ is sufficiently large.
This generalization also appears to be the correct generalization,
as shown by the theorems of this section.

Without explicitly defining the notion ``\lkog,'' 
Theorem 4.17 of~\cite{m} used a naively generalized
hypothesis to deduce some order-theoretic base properties.
The hypothesis was that, for some infinite cardinal $\mu$, $\lm=\mu^+$, $\ka=\cf(\mu)$, and $\pi^X_M[U]$ is the intersection of fewer than $\ka$\nbd-many
open sets, for all open $U\subseteq X$ and all $M\elemsub(\bigh,\in,\mcT_X)$
that satisfy $\card{M}\cap\lm\subseteq M\cap\lm\in\lm+1$.
Among the conclusions was that every continuous image $Y$ of $X$ has a $\pi$\nbd-base 
such that every $\ka$-sized subfamily's intersection has empty interior;
if also $\pi\character{y,Y}=\weight{Y}$ for all $y\in Y$, then
``$\pi$\nbd-base'' can be strengthened to ``base.''

Reading through the proof, it is not hard to check that 
the universal quantification over all open $U$ is stronger than necessary;
it suffices for $X$ to have a base $\mcB$ of functionally open sets such
that $U$ is as above for every $U\in\mcB$.
Moreover, from the perspective of choosing a good definition
of \lkog, the latter, narrower quantification is preferable, as shown by
the following example.

\begin{example}\label{dblarrow}
Let $D$ be the double-arrow space, that is, 
the lexicographically ordered space $([0,1]\times\{0,1\})\setminus\{\la 0,0\ra,\la 1,1\ra\}$.
This space is a totally disconnected compactum 
and its clopen algebra is isomorphic to the real interval algebra,
which has the WFN.  
Let $X=D^2$.  Since $\Clop(X)$ is isomorphic
to the coproduct $\Clop(D)\coprod\Clop(D)$, 
$\Clop(X)$ also has the WFN.
Therefore, $X$ should be $(\al_2,\al_1)$-openly generated for any good
definition of $(\al_2,\al_1)$-openly generated.

In the closed subspace $C=\{\la\la a,i\ra,\la b,j\ra\ra\in X:a+b=1\}$,
the set $I$ of isolated points is $\{\la\la a,i\ra,\la b,j\ra\ra\in C:i=j\}$.
Observe that if $L=C\setminus I$, then $L$ is homemorphic to $D$.
Let $U=(X\setminus C)\cup I$, which is open in $X$.
Let $M\elemsub(\bigh,\in)$ and $\card{M}\subseteq M\cap\om_2\in\om_2$.
We will show that $\neg CH$ implies that $\pi^X_M[U]$ is not a $G_\delta$ set.

We will prove the contrapositive; assume $\pi^X_M[U]$ is $G_\delta$.
First, observe that $\left(\pi^X_M\right)^{-1}[\pi^X_M[U]]=X\setminus(L\cap M)$
because a continuous real valued function from $M$ can separate the points
$\la\la a,i\ra,\la b,j\ra\ra$ and $\la\la a,i'\ra,\la b,j'\ra\ra$
iff $\la i,j\ra\not=\la i',j'\ra$ and $a,b\in M$.
Hence, $L\cap M$ is $F_\sigma$, so $D\cap M$ is $F_\sigma$.
Applying the canonical finite-to-one map from $D$ onto $[0,1]$, we
see that $[0,1]\cap M$ is $F_\sigma$.  
Since every uncountable Borel subset of $[0,1]$ contains a perfect set, 
$M$ contains a perfect set, so CH holds.
\end{example}

\begin{question}
In the presence of CH, the above $X$ has weight $\al_1$, so
$\pi^X_M$ becomes a trivial homeomorphism.
Is there a GCH analog of Example~\ref{dblarrow}?
\end{question}

\begin{definition}\label{dlkog}\
\begin{itemize}
\item An $(M,\ka)$-base of a compactum $X$ is
a base $\mcB$ of $X$ consisting only of functionally open sets $U$
for which $\pi^X_M[U]$ is the intersection of fewer than
$\ka$-many open sets.
\item A \lkb\ of a compactum $X$ is
a base $\mcB$ that is an $(M,\ka)$-base for all $M\elemsub(\bigh,\in,\mcB)$ 
satisfying $\card{M}\cap\lm\subseteq M\cap\lm\in\lm+1$.
\item A strong \lkb\ (respectively, strong $(M,\ka)$-base) of a compactum $X$ is
a \lkb\ (respectively, $(M,\ka)$-base) $\mcB$ of $X$ that satisfies
$U\cup V\in\mcB$ for all $U,V\in\mcB$.
\item A compactum $X$ is \lkog\ if it has a \lkb.
\item A compactum is \lka\ if it is
the continuous image of a \lkog\ compactum.
\end{itemize}
\end{definition}

\begin{remark}\
\begin{itemize}
\item If $\mcB$ is a \lkb,
then $\{\bigcup\mcF:\mcF\in[\mcB]^{<\alo}\}$ is a strong \lkb.
\item If $X$ has a $(\lm,\alo)$-base,
then $\pi^X_M$ is open for all $M\elemsub(\bigh,\in,\mcT_X)$ 
satisfying $M\cap\lm\in\lm+1$.
The converse is trivially true.
\end{itemize}
\end{remark}

\begin{theorem}\label{ogequiv}
Given regular infinite cardinals $\lm>\ka$, a compactum $X$,
a base $\mcB$ of $X$, and a sufficiently large regular $\theta$,
the following are equivalent.
\begin{enumerate}
\item\label{bigmbase} $\mcB$ is an $(M,\ka)$-base for all 
$M\elemsub(\bigh,\in,\mcB)$ satisfying $M\cap\lm\in\lm+1$.
\item\label{bigmbasecard} $\mcB$ is an $(M,\ka)$-base for all 
$M\elemsub(\bigh,\in,\mcB)$ satisfying 
$\card{M}\cap\lm\subseteq M\cap\lm\in\lm+1$.
\item\label{smallmbase} $\mcB$ is an $(M,\ka)$-base for all 
$M\elemsub(\bigh,\in,\mcB)$ satisfying $M\cap\lm\in\lm$.
\item\label{smallmbasecard} $\mcB$ is an $(M,\ka)$-base for all 
$M\elemsub(\bigh,\in,\mcB)$ satisfying $\card{M}\subseteq M\cap\lm\in\lm$.
\item\label{clubmbase} $\mcB$ is an $(M,\ka)$-base for club-many 
$M\in[\bigh]^{<\lm}$.
\end{enumerate}
If $X$ is also totally disconnected, then $X$ is \lkog\ iff
$\Clop(X)$ has the \lkfn.
\end{theorem}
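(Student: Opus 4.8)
Theorem~\ref{ogequiv} asserts the equivalence of five conditions about when a base $\mcB$ is an $(M,\ka)$-base, quantifying over elementary submodels $M$ of varying sizes, plus a final duality clause for totally disconnected $X$.

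The plan is to exploit the structural parallel with Theorem~\ref{equiv}: both theorems assert that a property holding for certain ``large'' models, ``small'' models, ``cardinality-restricted'' models, and ``club-many'' models are all equivalent. First I would observe that the implications $\eqref{bigmbase}\Rightarrow\eqref{smallmbase}\Rightarrow\eqref{smallmbasecard}$ and $\eqref{bigmbase}\Rightarrow\eqref{bigmbasecard}$ are trivial, since each restricts the class of models $M$ being quantified over (a smaller class of hypotheses gives a weaker statement). This leaves a cycle of nontrivial implications to complete, which I would organize exactly as in the proof of Theorem~\ref{equiv}: close one loop through $\eqref{smallmbasecard}\Rightarrow\eqref{clubmbase}\Rightarrow\eqref{bigmbase}$ and observe the remaining implications.

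For $\eqref{smallmbasecard}\Rightarrow\eqref{clubmbase}$, I would run the same Skolem-hull argument as in Theorem~\ref{equiv}: fix a well-ordering $\sqsubseteq$ of $\bigh$, let $\mcE$ be the set of $A\in[\bigh]^{<\lm}$ with $\card{A}\subseteq\Hull(A)\cap\lm\in\lm$ and $A=\Hull(A)\cap\ran$ (for the relevant range), show $\mcE$ is club using regularity of $\lm$ and the existence of Skolem terms, and deduce that $A$ being an $(A,\ka)$-base follows from $\Hull(A)$ being an elementary submodel to which $\eqref{smallmbasecard}$ applies; the point is that the property ``$\pi^X_M[U]$ is an intersection of fewer than $\ka$ open sets'' is determined by $\Hull(A)$. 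For $\eqref{clubmbase}\Rightarrow\eqref{bigmbase}$, I would reflect downward through a model $M$ with $M\cap\lm\in\lm+1$: by elementarity there are $\lm',\ka',\mcE'\in M$ with $\lm'\le\lm$, $\ka'\le\ka$, and $\mcE'$ a club of $(\cdot,\ka')$-bases, and then build a continuous elementary chain $(N_\alpha)_{\alpha\le\ka'}$ (exactly the device from Theorem~\ref{equiv}) whose union witnesses that $M$ itself is an $(M,\ka')$-base, hence an $(M,\ka)$-base. The engine is the basic fact, recalled just before the theorem, that $\pi^X_M[U]$ is functionally open whenever $U\in M$ is, together with the observation that membership of a witnessing family of size $<\ka'$ in $N_{\ka'}$ reflects into $M$.

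The duality clause is where the topological and algebraic sides meet, and I expect this to be the main obstacle---though it should reduce to invoking Theorem~\ref{equiv}. For totally disconnected $X$ we identify $\mcT_X$ with $\ult{\Clop(X)}$ and take $\mcB=\Clop(X)$ itself; then $\pi^X_M[U]$ for a clopen $U\in M$ being an intersection of fewer than $\ka$ open sets should correspond, under Stone duality, to the set $\Clop(X)\cap M$ interpolating below and above elements of $\Clop(X)$ with cofinality and coinitiality less than $\ka$, i.e.\ to $\Clop(X)\cap M\leq_\ka\Clop(X)$. The delicate step is verifying that the ``intersection of fewer than $\ka$ open sets'' condition translates precisely into the $\ka$-subalgebra condition of Definition~\ref{ksub}; once that dictionary is established for each relevant $M$, the equivalence of $\eqref{bigmbasecard}$ of this theorem with item~\eqref{bigmodelcard} of Theorem~\ref{equiv} gives that $X$ is \lkog\ iff $\Clop(X)$ has the \lkfn. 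I would take care to check that the base $\{\pi^X_M[U]:U\in\Clop(X)\cap M\}$ of $X/M$ consists of clopen sets so that the hyperspace-free, purely Stone-dual reformulation is legitimate, and that the quantifier over $M$ matches on both sides.
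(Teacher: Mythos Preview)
Your overall architecture matches the paper's: the trivial implications are correct, and closing the loop via $\eqref{smallmbasecard}\Rightarrow\eqref{clubmbase}\Rightarrow\eqref{bigmbase}$ is exactly what the paper does. For $\eqref{smallmbasecard}\Rightarrow\eqref{clubmbase}$ the paper is actually simpler than your Skolem-hull plan: since here the club lives in $[\bigh]^{<\lm}$ rather than $[P]^{<\lm}$, one can take $\mcE$ to be the set of elementary submodels $M\elemsub(\bigh,\in,\mcB)$ with $\card{M}\subseteq M\cap\lm\in\lm$ directly, without the extra bookkeeping of passing between $A$ and $\Hull(A)$. Your duality sketch is also essentially what the paper does.

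The genuine gap is in $\eqref{clubmbase}\Rightarrow\eqref{bigmbase}$. You propose to import the elementary-chain device $(N_\alpha)_{\alpha\le\ka'}$ from Theorem~\ref{equiv} verbatim, but that device does not transfer. In Theorem~\ref{equiv} the chain works because the target statement is ``$\cf(M\cap\downset q)<\ka'$,'' a first-order assertion about $M$, $q$, and a small witness $D_\alpha$; once $D_\alpha\in N_{\ka'}$ is cofinal in $M\cap N_{\ka'}\cap\downset q$, elementarity of $N_{\ka'}$ reflects this to cofinality in all of $M\cap\downset q$. Here, by contrast, the target statement is about $\pi^X_M[U]$, and the small models $M\cap N_\alpha$ only give information about $\pi^X_{M\cap N_\alpha}[U]$, which lives in a different quotient space. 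A witnessing family of open sets for $\pi^X_{M\cap N_\alpha}[U]$ pulls back to $M$-saturated open sets in $X$, but their intersection is $\bigl(\pi^X_{M\cap N_\alpha}\bigr)^{-1}\bigl[\pi^X_{M\cap N_\alpha}[U]\bigr]$, which in general strictly contains $\bigl(\pi^X_M\bigr)^{-1}\bigl[\pi^X_M[U]\bigr]$; there is no reflection step that closes this gap.

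The paper avoids this entirely with a different idea: after reflecting to find $\ka'<\lm'\le\theta'$ and a club $\mcD\subseteq[H(\theta')]^{<\lm'}$ inside $M$, one observes that $\pi^X_M=\pi^X_{M\cap H(\theta')}$ (the quotient depends only on the continuous real-valued functions in $M$, all of which lie in $H(\theta')$). Since $\lm'\le\lm$ can be arranged exactly as in Theorem~\ref{equiv}, one has $M\cap\lm'\in\lm'+1$, whence $M\cap H(\theta')\in\mcD$, and the conclusion for $M$ follows immediately from the conclusion for $M\cap H(\theta')$. No chain is needed; the key insight you are missing is that the quotient map is determined by a bounded fragment of $M$.
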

\begin{proof}
Trivially, 
$\eqref{bigmbase}\Rightarrow
\eqref{bigmbasecard}\Rightarrow
\eqref{smallmbasecard}$
and
$\eqref{bigmbase}\Rightarrow
\eqref{smallmbase}\Rightarrow
\eqref{smallmbasecard}$.
Therefore, it suffices to show that
$\eqref{smallmbasecard}\Rightarrow
\eqref{clubmbase}\Rightarrow
\eqref{bigmbase}$.

$\eqref{smallmbasecard}\Rightarrow\eqref{clubmbase}$:
Proceed similarly to the proof of 
$\eqref{smallmodelcard}\Rightarrow\eqref{club}$
of Theorem~\ref{equiv}.
Let $\mcE$ be the set of all $M\elemsub(\bigh,\in,\mcB)$
satisfying $\card{M}\subseteq M\cap\lm\in\lm$.
Every union of a chain in $\mcE$ of length less than $\lm$
is itself in $\mcE$, so $\mcE$ is closed in $[\bigh]^{<\lm}$.
Moreover, for every $A\in[\bigh]^{<\lm}$,
the union of an appropriate elementary chain 
will both contain $A$ and be in $\mcE$.
Thus, $\mcE$ is a club subset of $[\bigh]^{<\lm}$.

$\eqref{clubmbase}\Rightarrow\eqref{bigmbase}$:
Let $M\elemsub(\bigh,\in,\mcB)$ and $M\cap\lm\in\lm+1$.
Now observe that increasing $\theta$ preserves \eqref{clubmbase}:
assuming $\theta$ is already large enough that $\mcB\in\bigh$,
if $\mcE\subseteq[\bigh]^{<\lm}$ is a club and $\nu>\theta$,
then $\pi^X_N=\pi^X_{N\cap\bigh}$ for all $N\elemsub(H(\nu),\in,\mcB)$,
and club\nbd-many $N\in[H(\nu)]^{<\lm}$ satisfy 
$N\elemsub(H(\nu),\in,\mcB)$ and $N\cap\bigh\in\mcE$.
Therefore, \wma\ that $\theta$ is sufficiently large 
for us to apply elementarity to produce 
regular infinite cardinals $\ka'<\lm'\leq\theta'$
and a club $\mcD\subseteq[H(\theta')]^{<\lm'}$ such that 
$\mcD,\ka',\lm'\in M$, $\mcB\in H(\theta')$, and
$\mcB$ is a $(N,\ka')$-base for all $N\in\mcD$.
If $\lm'\cap M\in\lm'+1$, then $M\cap H(\theta')\in\mcD$
and $\pi^X_M=\pi^X_{M\cap H(\theta')}$, 
so $\mcB$ is an $(M,\ka')$\nbd-base.
Just as in the proof of $\eqref{club}\Rightarrow\eqref{bigmodel}$ 
of Theorem~\ref{equiv}, we can always choose $\lm'\leq\lm$ and
$\ka'\leq\ka$, so $\mcB$ is an $(M,\ka)$\nbd-base.
Thus, $\eqref{clubmbase}\Rightarrow\eqref{bigmbase}$.

Now assume that $X$ is totally disconnected, 
$M\elemsub(\bigh,\in,\mcT_X)$,
$A\in \mcA=\Clop(X)$, and $M\cap\lm\in\lm$.
First, note that any strong $(M,\ka)$-base $\mcB$
includes all the nonempty clopen sets, and that
if $\pi^X_M[A]$ and $\pi^X_M[X\setminus A]$ are 
each the intersection of fewer than $\ka$-many open sets,
then, by compactness, they each have clopen neighborhood bases
of size less than $\ka$; hence, $M\cap\upset_\mcA A$ has
coinitiality less than $\ka$ and $M\cap\downset_\mcA A$ has
cofinality less than $\ka$. 
Conversely, if $M\cap\upset_\mcA C$ has coinitiality less than $\ka$ 
for every clopen $C$, 
then $\Clop(X)$ is a strong $(M,\ka)$-base.
Thus, $X$ is \lkog\ iff $\Clop(X)$ has the \lkfn.
\end{proof}

\begin{corollary}
A compactum is openly generated iff it is $(\al_1,\alo)$-openly generated.
\end{corollary}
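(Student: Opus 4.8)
The statement to prove is the corollary that a compactum is openly generated if and only if it is $(\al_1,\alo)$-openly generated. The plan is to reduce everything to the equivalences already established for the more general \lkb\ notion in Theorem~\ref{ogequiv} and to Bandlow's classical characterization recalled in the introduction. The key observation is that the case $\lm=\al_1$, $\ka=\alo$ of the \lkog\ property should collapse exactly onto Bandlow's definition of ``openly generated,'' namely that $\pi^X_M$ is open for a club of countable elementary submodels $M$ of $(\bigh,\in,\mcT_X)$.

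First I would unpack what a $(\al_1,\alo)$-base is. By Definition~\ref{dlkog}, an $(M,\alo)$-base is a base $\mcB$ of functionally open sets $U$ for which $\pi^X_M[U]$ is the intersection of fewer than $\alo$-many open sets, i.e.\ a \emph{finite} intersection of open sets, i.e.\ simply an open set. Thus an $(M,\alo)$-base is just a base $\mcB$ of functionally open sets each of whose images $\pi^X_M[U]$ is open. Using the basic facts about elementary quotients recalled before Lemma~4.11 (that $X/M$ has a base of sets $\pi^X_M[U]$ with $U\in M$ functionally open, and that $\pi^X_M[U]$ is functionally open for such $U$), the existence of such a base for all appropriate $M$ is exactly the condition that $\pi^X_M$ be an open map. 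This matches the second bullet of the remark following Definition~\ref{dlkog}, which states that having a $(\lm,\alo)$-base is equivalent to $\pi^X_M$ being open for all $M\elemsub(\bigh,\in,\mcT_X)$ with $M\cap\lm\in\lm+1$.

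Next I would apply Theorem~\ref{ogequiv} with $\lm=\al_1$ and $\ka=\alo$, which are regular with $\lm>\ka$. Item~\eqref{clubmbase} of that theorem says $\mcB$ is an $(M,\alo)$-base for club-many $M\in[\bigh]^{<\al_1}$, that is, for club-many \emph{countable} $M$; by the preceding paragraph this is precisely Bandlow's condition that $\pi^X_M$ be open for a club of countable $M$, i.e.\ the definition of openly generated. Meanwhile item~\eqref{bigmbase}, together with the remark, says $\pi^X_M$ is open for \emph{all} $M\elemsub(\bigh,\in,\mcT_X)$ with $M\cap\al_1\in\al_1+1$, which is the ``large submodel'' characterization, and $X$ being \lkog\ with these parameters is by definition the existence of a single base that is an $(M,\alo)$-base for all such $M$. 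Since Theorem~\ref{ogequiv} asserts all five items are equivalent, the club-many-countable formulation (openly generated) and the $(\al_1,\alo)$-openly generated formulation coincide.

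The only genuine content to verify is that Bandlow's definition, phrased in terms of openness of $\pi^X_M$ for club-many countable $M$, literally coincides with item~\eqref{clubmbase} at these parameters, and that passing from ``there is an $(M,\alo)$-base'' to ``$\pi^X_M$ is open'' is harmless. This is the step I expect to require the most care, though it is not deep: the subtlety is that the definition of \lkog\ demands a \emph{single} base $\mcB$ working across all relevant $M$ simultaneously, whereas openness of each individual $\pi^X_M$ only asks for images of open sets to be open for that particular $M$. The resolution is that the natural base of all functionally open $U\in\bigcup M$ works uniformly, so no real obstacle arises; one invokes the equivalence of items~\eqref{clubmbase} and~\eqref{bigmbase} in Theorem~\ref{ogequiv} to move between the club version and the all-$M$ version, and the remark after Definition~\ref{dlkog} to translate the $\ka=\alo$ base condition into openness. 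Assembling these gives the corollary in a few lines.
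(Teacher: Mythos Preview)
Your proposal is correct and follows essentially the paper's intended route: the corollary is stated without proof because it is meant to drop out of Theorem~\ref{ogequiv} at $\lm=\al_1$, $\ka=\alo$ together with the remark after Definition~\ref{dlkog} identifying the $(\lm,\alo)$-base condition with openness of $\pi^X_M$, and Bandlow's club formulation recalled at the start of Section~\ref{cpct}. Your discussion of the ``single base versus per-$M$ openness'' subtlety is exactly what the remark's ``the converse is trivially true'' is absorbing, so you have unpacked the implicit argument rather than found a different one.
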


For all regular infinite cardinals $\lm>\ka$,
the next theorem consists of Stone duals of some results from Section~\ref{sym},
naturally generalized from the totally disconnected compacta to all compacta.
\begin{theorem}
Assume $\lm$ and $\ka$ are infinite cardinals and $\lm\geq\ka$.
\begin{enumerate}
\item\label{smallw} A compactum with weight less than $\lm$ (see Def.~\ref{cardf}) is \laog.
\item\label{prod} Products of \lkog\ compacta are \lkog.
\item\label{hyper} The Vietoris hyperspace of a \lkog\ compactum is \lkog.
\end{enumerate}
\end{theorem}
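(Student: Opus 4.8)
The plan is to verify, for each of the three constructions, one of the equivalent base conditions of Theorem~\ref{ogequiv}: it suffices to exhibit a base $\mcB$ of the constructed space that is an $(M,\ka)$-base for every $M\elemsub(\bigh,\in,\mcB)$ with $M\cap\lm\in\lm$, i.e.\ condition~\eqref{smallmbase}, and then invoke Theorem~\ref{ogequiv} to upgrade this to the definition. In every case the real content is a description of the elementary quotient of the constructed space in terms of the elementary quotients of its ingredients, after which the required bound on the image of a basic open set is read off from the corresponding bound for the ingredients. I will focus on the principal case of regular $\lm>\ka$, where Theorem~\ref{ogequiv} is available; the cases $\lm=\ka$ and singular $\lm$ are handled by the definitions and by the preservation Theorems~\ref{coprod} and~\ref{exp} directly.

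For \eqref{smallw}, let $\mcB$ be a base of functionally open sets with $\card{\mcB}<\lm$, which exists because $X$ is a compactum of weight less than $\lm$. Fix $M\elemsub(\bigh,\in,\mcB)$ with $M\cap\lm\in\lm$. Since $\mcB\in M$ and $\card{\mcB}<\lm$, elementarity provides a surjection from $\card{\mcB}$ onto $\mcB$ lying in $M$; as $\card{\mcB}\subseteq M\cap\lm\subseteq M$, this forces $\mcB\subseteq M$. Hence each $U\in\mcB$ satisfies $U\in M$, so by the basic properties of elementary quotients $\pi^X_M[U]$ is functionally open; in particular it is an intersection of fewer than $\alo$ open sets. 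Thus $\mcB$ is an $(M,\alo)$-base, and by Theorem~\ref{ogequiv}, $X$ is \laog.

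For \eqref{prod}, write $X=\prod_{i\in I}X_i$ and fix a \lkb\ $\mcB_i$ of each $X_i$. The first step is the identification $X/M\homeo\prod_{i\in I\cap M}X_i/M$, valid for $M\elemsub(\bigh,\in,(X_i)_{i\in I},(\mcB_i)_{i\in I})$, under which $\pi^X_M(p)$ corresponds to $\bigl(\pi^{X_i}_M(\pi_i(p))\bigr)_{i\in I\cap M}$, where $\pi_i\colon X\to X_i$ is the projection. The nontrivial direction is that agreement of all coordinate quotients forces $\pi^X_M(p)=\pi^X_M(q)$: any continuous $h\in M$ on $X$ is uniformly approximated by a polynomial in finitely many functions $f\circ\pi_i$, and by elementarity such an approximation may be taken in $M$, so that every index involved lies in $I\cap M$ and every $f$ lies in $M$; agreement of the coordinate quotients then gives $h(p)=h(q)$. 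Surjectivity of the comparison map is immediate by choosing coordinates fiberwise. Taking as base for $X$ the functionally open boxes $U=\bigcap_{i\in t}\pi_i^{-1}[U_i]$ with $t\in[I]^{<\om}$ and $U_i\in\mcB_i$ nonempty, one computes under this identification that $\pi^X_M[U]=\bigcap_{i\in t\cap M}\mathrm{pr}_i^{-1}\bigl[\pi^{X_i}_M[U_i]\bigr]$, since coordinates outside $M$ are invisible to $\pi^X_M$. For $i\in t\cap M$ the model $M$ reflects the \lkb\ property of $\mcB_i$, so by condition~\eqref{smallmbase} of Theorem~\ref{ogequiv} each $\pi^{X_i}_M[U_i]$ is an intersection of fewer than $\ka$ open sets; a finite intersection of such sets is again one, so $\mcB$ is an $(M,\ka)$-base and $X$ is \lkog.

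For \eqref{hyper}, let $X$ be \lkog\ with strong \lkb\ $\mcB$, and let $2^X$ be its Vietoris hyperspace. The clean first step is the identification $2^X/M\homeo 2^{X/M}$ under which $\pi^{2^X}_M(F)$ corresponds to $\pi^X_M[F]$: the functions $F\mapsto\max_{x\in F}f(x)$ and $F\mapsto\min_{x\in F}f(x)$ for $f\in C(X)$ generate a dense subalgebra of $C(2^X)$, each such function with $f\in M$ factors through $F\mapsto\pi^X_M[F]$ because $f$ factors through $\pi^X_M$, and conversely two closed sets with distinct $\pi^X_M$-images are separated by such a function in $M$. Taking as base for $2^X$ the Vietoris sets $\langle U_0,\dots,U_{n-1}\rangle$ with $U_j\in\mcB$, which are functionally open, the task reduces to showing that the image of such a set under the hyperspace map $F\mapsto\pi^X_M[F]$ is an intersection of fewer than $\ka$ open sets in $2^{X/M}$, given that each $\pi^X_M[U_j]$ is such an intersection in $X/M$. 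The ``subset/miss'' constraints behave well, since $\{E:E\subseteq V\}$ is an intersection of fewer than $\ka$ open sets whenever $V$ is; the hard part will be the ``hit'' constraints $\{E:E\cap V\neq\varnothing\}$, for which hitting an intersection is strictly stronger than hitting each member, exactly paralleling the failure of $[a\vee b]=[a]\vee[b]$ and the three-case analysis in the proof of Theorem~\ref{exp}. I expect to resolve this by mirroring that case analysis, splitting according to how the saturations of the $U_j$ meet the relevant fibers so that each contribution is expressed through images $\pi^X_M[\,\cdot\,]$ of \lkb-members and is again an intersection of fewer than $\ka$ open sets. For totally disconnected $X$ this step is unnecessary, since $\Clop(2^X)=\exp(\Clop(X))$ and the result follows at once from Theorems~\ref{exp} and~\ref{ogequiv}.
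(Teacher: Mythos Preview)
Your treatment of \eqref{smallw} and \eqref{prod} is essentially the paper's, modulo presentation: the paper also argues $\mcB\subseteq M$ for \eqref{smallw} and computes $\pi^X_M[U]=\bigcap_{i\in s\cap M}\pi_i^{-1}[\pi^{X_i}_M[U_i]]$ for \eqref{prod} after the same coordinatewise description of $\pi^X_M$.

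For \eqref{hyper} there is a genuine gap. Your identification $2^X/M\cong 2^{X/M}$ is correct and the min/max sketch can be completed via Stone--Weierstrass, but knowing the quotient spaces agree does \emph{not} tell you that $\Phi\bigl[\langle U_0,\ldots,U_{n-1}\rangle\bigr]=\langle\pi^X_M[U_0],\ldots,\pi^X_M[U_{n-1}]\rangle$, which you implicitly assume when you split into ``subset'' and ``hit'' constraints. Only the inclusion $\subseteq$ is immediate; for the reverse, given $H\in\langle\pi^X_M[\vec U]\rangle$ you need $F\in\langle\vec U\rangle$ with $\pi^X_M[F]=H$, and the obvious candidate $\overline{F_0}$ built from fiber-lifts of points of $H$ may escape $\bigcup_j U_j$ upon closure. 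The paper's main work is exactly here: it proves the sandwich $\langle\mcF\rangle_M\subseteq\langle\mcF_M\rangle\subseteq\bigl(\,\overline{\langle\mcF\rangle}\,\bigr)_M$, establishing the right-hand inclusion by constructing, for each $M$-basic neighborhood $\langle\mcG\rangle$ of $H$, an explicit finite witness $K\in\langle\mcF\rangle\cap\langle\mcG\rangle$; functional openness of $\langle\mcF\rangle$ then collapses the sandwich to an equality. Your plan bypasses this construction entirely.

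Your diagnosis of the ``hit'' difficulty actually concerns the \emph{subsequent} step---showing that $\langle\mcF_M\rangle$ (equivalently $\langle\pi^X_M[\vec U]\rangle$) is $G_{<\ka}$ once the image has been identified---and the paper dispatches it with the single formula $\bigl\langle\bigcap_\alpha O^0_\alpha,\ldots,\bigcap_\alpha O^{n-1}_\alpha\bigr\rangle=\bigcap_t\bigl\langle O^0_{t(0)},\ldots,O^{n-1}_{t(n-1)}\bigr\rangle$, not by anything resembling the three-case analysis of Theorem~\ref{exp}. That Boolean-algebraic case split is about interpolating between two fixed elements of $\exp(B)$ and does not translate into a method for expressing a hyperspace set as a small intersection of opens; invoking it here would not close the gap.
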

\begin{proof}
\eqref{smallw}: If $\mcB$ is a base of a compactum $X$, 
$\card{\mcB}<\lm$, $M\elemsub(\bigh,\in,\mcB)$, and $M\cap\lm\in\lm+1$,
then $\mcB\subseteq M$, so $\pi^X_M$ is $id_X$
(modulo identifying points with singleton equivalence classes), 
so $\pi^X_M$ is open.

\eqref{prod}: Given $X=\prod_{i\in I}X_i$, assume each $X_i$ has a \lkb\ $\mcB_i$,
and let $\mcB$ be the base of $X$ consisting of all open boxes $U=\bigcap_{i\in s}\inv{\pi_i}[U_i]$
where $s\in[I]^{<\alo}$ and each $U_i$ is in $\mcB_i$ and is not $\varnothing$ nor $X_i$.
Fix $M\elemsub(\bigh,\in,\mcB)$ such that 
$\card{M}\cap\lm\subseteq M\cap\lm\in\lm+1$; 
we then have $(\mcB_i:i\in I)\in M$.
Given $U$ as above, $U$ is functionally open, so
it suffices to show that $\pi^X_M[U]$ is the intersection of fewer than $\ka$-many open sets.
If $p,q\in X$, then $\pi^X_M(p)\not=\pi^X_M(q)$ iff $\pi^{X_i}_M(p(i))\not=\pi^{X_i}_M(q(i))$
for some $i\in I\cap M$.  
Therefore, $\pi^X_M[U]=\bigcap_{i\in s\cap M}\inv{\pi_i}[\pi^{X_i}_M[U_i]]$,
which is an intersection of fewer than $\ka$-many open sets because 
each $\pi^{X_i}_M[U_i]$ is an intersection of fewer than $\ka$-many open sets.

\eqref{hyper}: Let $\mcB$ be a \lkb\ of $X$; let $Y=2^X$.
Given $E\subseteq X$, let $[E]=\{F\in Y:F\subseteq E\}$.
By definition, $\{[O]: O\text{ open}\}\cup\{Y\setminus[C]: C\text{ closed}\}$
is a subbase of $Y$.
Observe that $\closure{[E]}=\Bigl[\closure{E}\Bigr]$.
Also, if $\mcE\subseteq\powset{X}$, then $[\bigcap\mcE]=\bigcap_{E\in\mcE}[E]$.
By compactness, if $\mcU$ is a family of open subsets of $X$, then 
$[\bigcup\mcU]=\bigcup\{[\bigcup\mcF]:\mcF\in[\mcU]^{<\alo}\}$.
Therefore, if $U\subseteq X$ is functionally open, then,
since being functionally open is equivalent to
being a union $\bigcup_{n<\om}E_n$ where the closure of $E_n$
is contained in the interior of $E_{n+1}$,
$[U]$ is also functionally open.

Given $\mcF\in[\mcB]^{<\alo}$, 
let $\la\mcF\ra=[\bigcup\mcF]\setminus\bigcup_{U\in\mcF}[X\setminus U]$,
which is the set of all nonempty closed subsets of $\bigcup\mcF$ that
intersect every element of $\mcF$.
By compactness,
$\mcA=\{\la\mcF\ra:\mcF\in[\mcB]^{<\alo}\}$ is a base of $Y$.
Morover, since every $U\in\mcB$ is functionally open, 
every $\la\mcF\ra\in\mcA$ is functionally open.
Let $M\elemsub(\bigh,\in,\mcA)$ and 
$\card{M}\cap\lm\subseteq M\cap\lm\in\lm+1$.
It suffices to show that  $\mcA$ is a $(M,\ka)$-base of $Y$.

For each $E\subseteq X$, set $E_M=\inv{\left(\pi^X_M\right)}[\pi^X_M[E]]$;
likewise define $E_M$ for each $E\subseteq Y$.
Observe that $\mcB\in M$, so $\mcB$ is an $(M,\ka)$-base of $X$, so
if $\la\mcF\ra\in\mcA$, then $\la\mcF_M\ra=\la\{U_M:U\in\mcF\}\ra$ 
is the intersection of fewer than $\ka$-many open sets:
if $\mcF_M=\{\bigcap_{\alpha<\beta(i)}O^i_\alpha:i<n\}$ where $\beta(i)<\ka$,
then $\la\mcF_M\ra=\bigcap\{\la\{O^i_{t(i)}:i<n\}\ra:t\in\prod_{i<n}\beta(i)\}$.
Moreover, since $\pi^Y_M$ is a quotient map, 
$\pi^Y_M[\la\mcF\ra]$ is the intersection of fewer than $\ka$-many open sets
iff $\la\mcF\ra_M$ is.
Therefore, it suffices to show that $\la\mcF\ra_M=\la\mcF_M\ra$ for all
$\la\mcF\ra\in\mcA$.
Every $\la\mcF\ra\in\mcA$ is functionally open, so it is a union
$\bigcup_{i<\omega}\closure{\la\mcF_i\ra}$ where $\la\mcF_i\ra\in\mcA$.
Therefore, it suffices to prove that 
$\la\mcF\ra_M\subseteq\la\mcF_M\ra\subseteq\left(\closure{\la\mcF\ra}\right)_M$
for every $\la\mcF\ra\in\mcA$.

Fix $\la\mcF\ra\in\mcA$. First suppose that $H\in\la\mcF_M\ra$.  
To prove that $H\in\left(\closure{\la\mcF\ra}\right)_M$,
it suffices to show that for every $\la\mcG\ra\in\mcA\cap M$,
if $H\in\la\mcG\ra$, then $\la\mcF\ra\cap\la\mcG\ra\not=\varnothing$.
(Indeed, if $K_{\la\mcG\ra}\in\la\mcF\ra\cap\la\mcG\ra$
whenever $H\in\la\mcG\ra\in\mcA\cap M$, then we have a net $\vec{K}$
which has one or more cluster points $L$, all of which are
in $\closure{\la\mcF\ra}$ and satisfy $\pi^Y_M(L)=\pi^Y_M(H)$.)
So, suppose $H\in\la\mcG\ra\in\mcA\cap M$.  
For each $U\in\mcF$, choose $a_U\in H\cap U_M$ and
then choose $a_U'\in U$ such that $\pi^X_M(a_U')=\pi^X_M(a_U)$.
For each $V\in\mcG$, choose $b_V\in H\cap V\subseteq\bigcup(\mcF_M)=(\bigcup\mcF)_M$
and then choose $b'_V\in\bigcup\mcF$ such that $\pi^X_M(b_V')=\pi^X_M(b_V)$.
Set $K=\{a_U':U\in\mcF\}\cup\{b_V':V\in\mcG\}$.
Fix $U\in\mcF$ and $V\in\mcG$. 
We have $a_U'\in U\subseteq\bigcup\mcF$ 
and $b_V'\in\bigcup\mcF$, 
so $K\subseteq\bigcup\mcF$;
we have $a_U'\in H_M\subseteq\bigcup\mcG_M=\bigcup\mcG$ 
and $b_V'\in V_M=V\subseteq\bigcup\mcG$, 
so $K\subseteq\bigcup\mcG$;
we have $a_U'\in U$, so $K\cap U\not=\varnothing$;
we have $b_V'\in V_M=V$, so $K\cap V\not=\varnothing$.
Thus, $K\in\la\mcF\ra\cap\la\mcG\ra$ as desired.

Now suppose that $H\not\in\la\mcF_M\ra$.  
All that remains is to show that $H\not\in\la\mcF\ra_M$.
Fix $K\in\la\mcF\ra$; it suffices to show that $H$ and $K$
have disjoint closed neighborhoods in $M$.
By definition, at least one of two cases occurs: 
$H\not\subseteq\bigcup\mcF_M$ or
$H\cap U_M=\varnothing$ for some $U\in\mcF$.
In the first case, we may choose $p\in H\setminus K_M$
and then, by compactness, neighborhoods $V,W\in M$
of $p$ and $K$, respectively, 
such that $\closure{V}\cap\closure{W}=\varnothing$.
For such $V$ and $W$, we have
$H\in Y\setminus[X\setminus V]$, $K\in[W]$,
and $\closure{Y\setminus[X\setminus V]}\cap\closure{[W]}=\varnothing$,
so $H$ and $K$ have disjoint closed neighborhoods in $M$.
In the second case, we may choose $p\in K\setminus H_M$
and then, by compactness, 
neighborhoods $V,W\in M$ of $p$ and $H$, respectively,
such that $\closure{V}\cap\closure{W}=\varnothing$.
For such $V$ and $W$, we have
$K\in Y\setminus[X\setminus V]$, $H\in[W]$, 
and $\closure{Y\setminus[X\setminus V]}\cap\closure{[W]}=\varnothing$,
so $H$ and $K$ have disjoint closed neighborhoods in $M$.
\end{proof}

\begin{remark} \scepin~\cite{sc} proved that openly generated compacta
have openly generated hyperspaces using a completely different
argument based on his characterization of open generation
as the existence of a $k$-metric, a notion of distance 
between points and regular closed sets.
\end{remark}

Shapiro used hyperspaces to demonstrate 
that not all openly generated compacta are 
continuous images of products of 
spaces with small weight.  In particular,
not all openly generated compacta are dyadic:

\begin{theorem}[Shapiro~\cite{sa}] 
For all infinite successor cardinals $\lm$,
if $X$ is a compactum with weight greater than $\lm$
(\eg\ the openly generated ${}^{\lm^+}2$), then
the Vietoris hyperspace $2^X$ is not a continuous image of 
any product of compacta whose factors all have weight less than $\lm$.
\end{theorem}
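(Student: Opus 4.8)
The plan is to isolate a property enjoyed by every continuous image of a product of compacta of weight less than $\lm$ but violated by $2^X$ whenever $\weight{X}>\lm$. Two reductions come first. Since $\weight{2^X}=\weight{X}$ and the hyperspace functor is covariant, any continuous surjection $X\to W$ induces a continuous surjection $2^X\to 2^W$; as the class of continuous images of products of weight-$<\lm$ compacta is closed under continuous images, it is harmless to replace $X$ by a continuous image of weight exactly $\lm^+=\mu^+{}^+$ (writing $\lm=\mu^+$). So rename and assume $\weight{X}=\lm^+$, and suppose toward a contradiction that $2^X=g[\prod_{i\in I}Y_i]$ with each $\weight{Y_i}\le\mu$.

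The main tool is the Engelking--\scepin\ factorization: embedding $\prod_{i\in I}Y_i$ as a closed subspace of a Tikhonov cube and extending by Tietze, every continuous real-valued function on the product depends on only countably many coordinates, so every continuous map from the product to a space of weight $\le\mu$ factors through $\prod_{i\in J}Y_i$ for some $J\in[I]^{\le\mu}$. Hence $2^X$ is the limit of a $\lm$-directed inverse system of weight-$\le\mu$ compacta, each of which is itself a continuous image of a product of $\le\mu$ of the factors; in particular every functionally open, functionally regular subset of $2^X$ (e.g.\ every clopen set, when $X$ is totally disconnected) is determined by a countable set of coordinates of the product.

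One then feeds the large weight of $X$ into this decomposition. From $\weight{X}=\lm^+$ extract an irredundant family $\{C_\alpha:\alpha<\lm^+\}$ of regular-closed sets of $X$ (coordinate half-spaces when $X$ is totally disconnected, level sets of a weight-witnessing family of functions in general); the sets $[C_\alpha]=\{F\in 2^X:F\subseteq C_\alpha\}$ then form an irredundant family in $2^X$, knit to the top point $X\in 2^X$ by a convergence relation ($C_\alpha\to X$, so that $\{C_\alpha:\alpha<\lm^+\}\cup\{X\}$ is a copy of the one-point compactification $A(\lm^+)$). Pulling the $[C_\alpha]$ back through $g$, assigning to each its countable coordinate-support $J_\alpha$, and thinning by the $\Delta$-system lemma together with a pressing-down argument (using regularity of $\lm$ and of $\lm^+$), one arranges $\lm^+$ of them to have pairwise-disjoint supports off a fixed countable root. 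The contradiction I aim for is that the coherence forced at the limit $X$ --- every neighborhood of $X$ omits only finitely many $C_\alpha$ --- ties genuinely $\lm^+$-much independent information to a single point, whereas an assembly from $\le\mu$-coordinate fragments must realize that coherence using fewer than $\lm$ coordinates, collapsing the family.

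The step I expect to be the true obstacle is precisely this last one: converting ``coherence unrealizable by small-coordinate pieces'' into a rigorous incompatibility. This is the hard core of Shapiro's theorem, and it must be delicate, because in the motivating case $X=2^{\lm^+}$ the space $2^X$ is itself \laog\ by the hyperspace preservation theorem of this section, so it shares with genuine product-images every consequence of open generation, and direct computation shows the standard separating invariants coincide for the two (both have caliber $\lm$, by the same $\Delta$-system computation; their cellularity, character, $\pi$-character spectra, and Noetherian-type base properties agree). Thus no cardinal-function invariant can witness the theorem: the distinguishing feature lives in the finer structural layer of \scepin's spectral theory --- the gap between being openly generated and being a continuous image of a product --- so the argument must exploit that the hyperspace's open spectrum is \emph{not} a product spectrum, equivalently that $2^X$ lacks the regular averaging/extension operators a product-image inherits from its coordinate projections. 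Showing that no such operator exists over the weight-$\lm^+$ quotient built above, driven by $\weight{X}>\lm$, is where the weight hypothesis is genuinely consumed.
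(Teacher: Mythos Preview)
The paper does not prove this theorem. It is stated with attribution to Shapiro and a citation to \cite{sa}; no argument is supplied. So there is no proof in the paper for your attempt to be compared against.

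As for the proposal on its own terms: it is not a proof, and you say so yourself. The first three paragraphs assemble a reasonable scaffold --- reduce to $\weight{X}=\lm^+$, invoke factorization of maps from products through small subproducts, pull back a $\lm^+$-sized family $\{[C_\alpha]\}$ and thin by a $\Delta$-system --- but the fourth paragraph explicitly names ``the step I expect to be the true obstacle'' and then does not carry it out, substituting instead a discussion of why cardinal invariants cannot work and a gesture toward spectral theory and averaging operators. An outline that locates its own missing core and leaves it unfilled is a plan, not a proof.

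There are also loose ends earlier that would need tightening even if the final step were supplied. The claim that $\{C_\alpha:\alpha<\lm^+\}\cup\{X\}$ sits in $2^X$ as a copy of $A(\lm^+)$ requires that every basic Vietoris neighborhood $\la U_1,\ldots,U_n\ra$ of $X$ (so $\bigcup_i U_i=X$) contain all but finitely many $C_\alpha$, \ie\ that all but finitely many $C_\alpha$ meet every member of every finite open cover of $X$; ``irredundant family of regular closed sets'' does not deliver this without further work. You also slide between ``depends on $\leq\mu$ coordinates'' (from the factorization paragraph) and ``countable coordinate-support $J_\alpha$'' (in the $\Delta$-system paragraph); and it is not clear in what sense a closed set $[C_\alpha]$ is ``determined by'' a set of coordinates of the product at all, since your factorization statement is about functionally open sets. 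None of this is necessarily fatal, but combined with the acknowledged gap at the end, what you have is a sketch of where an argument might live rather than an argument.
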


Moreover, not all dyadic compacta are open generated;
Engelking proved this in~\cite{e}, and Fuchino, Koppelberg,
and Shelah noted the Stone dual of this in terms of
the $\ka$-FN in Proposition 7.6 of~\cite{fks}.
The proof from~\cite{fks} easily generalized to yield the following.

\begin{theorem}[\cite{e}; \cite{fks}]
For any two infinite cardinals $\lm>\ka$,
the quotient of ${}^\lm 2$ formed by identifying $(0:i<\lm)$ and
$(1:i<\lm)$  is not \lkog.
\end{theorem}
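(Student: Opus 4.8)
The plan is to exploit that the quotient $X$ of ${}^\lm 2$ identifying $\mathbf 0=(0:i<\lm)$ and $\mathbf 1=(1:i<\lm)$ is totally disconnected, and then to invoke the final clause of Theorem~\ref{ogequiv}, reducing everything to showing that $\Clop(X)$ lacks the \lkfn. First I would check total disconnectedness directly: a subset of $X$ is clopen exactly when its preimage in ${}^\lm 2$ is a clopen set containing both or neither of $\mathbf 0,\mathbf 1$ (a \emph{balanced} clopen), and given two distinct points one easily builds a balanced clopen separating them (for a point $p\ne *=[\mathbf 0]=[\mathbf 1]$, use a clopen whose pattern is mixed, hence avoids both $\mathbf 0$ and $\mathbf 1$). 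Thus $\Clop(X)$ is the algebra of balanced clopen subsets of ${}^\lm 2$, equivalently the preimage of the diagonal $\{(0,0),(1,1)\}$ under the homomorphism $a\mapsto(u_0(a),u_1(a))$ into $2\times 2$, where $u_0,u_1$ send every generator to $0$ and to $1$, respectively.

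To show $\Clop(X)$ lacks the \lkfn\ when $\lm>\ka$ are regular, I would use Theorem~\ref{equiv}, which gives \eqref{fnmap}$\Leftrightarrow$\eqref{smallmodel}: it suffices to exhibit a single $M\elemsub(\bigh,\in,\leq_{\Clop(X)})$ with $M\cap\lm=\delta\in\lm$ for which $\Clop(X)\cap M\not\leq_\ka\Clop(X)$. Since $\ka<\lm$, I would choose $\delta<\lm$ with $\card\delta\geq\ka$ and take $M$ with $M\cap\lm=\delta$. Fix $\xi\in\lm\setminus\delta$ and consider the asymmetric balanced clopen $q=\{x\in{}^\lm 2:x(0)=0,\ x(\xi)=1\}\in\Clop(X)$, which contains neither $\mathbf 0$ nor $\mathbf 1$ and satisfies $q\notin M$. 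The crux is computing the coinitiality of $\Clop(X)\cap M\cap\upset q$: every balanced $c\in M$ has finite support inside $\delta$, hence does not depend on coordinate $\xi$, so $c\geq q$ forces $\{x:x(0)=0\}\subseteq c$; balancedness then forces $\mathbf 1\in c$, so $c\supseteq\{x:x(0)=0\}\cup\{x:x\restriction s=\vec 1\}$ for some finite $s\subseteq\delta$. A short argument shows that any coinitial subfamily of such $c$ produces a family of supports cofinal in $([\delta]^{<\om},\subseteq)$, so the coinitiality equals $\cf([\delta]^{<\om},\subseteq)=\card\delta\geq\ka$. Hence $\Clop(X)\cap M\not\leq_\ka\Clop(X)$, so $\Clop(X)$ has no \lkfn, and by Theorem~\ref{ogequiv} the space $X$ is not \lkog.

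The \emph{main obstacle}, and the point where a naive attempt breaks down, is that the obstruction does not sit at the glued point $*$ itself: neighborhoods of $*$ are images of balanced clopens meeting both fibers, and their $\pi^X_M$-images are open, so $\pi^X_M$ looks harmless near $*$. The failure instead comes from asymmetric clopens like $q$ that approach only one of the two identified fibers, and pinning down the excluded approach from the other side costs $\card\delta$ many open sets. A secondary issue is that Theorems~\ref{equiv} and~\ref{ogequiv} assume $\lm>\ka$ regular; to cover singular $\lm$ or $\ka$ I would run the same witness directly against Definition~\ref{dlkog}, choosing for an arbitrary base $\mcB$ and the same $M$ a point $y$ with $\tilde y\restriction\delta=\vec 0$ but $\tilde y(\xi)=1$ for one $\xi\geq\delta$ (so $\pi^X_M(y)=\pi^X_M(*)$ yet $y\neq *$) and a small $U\in\mcB$ containing $y$; then $\pi^X_M[U]$ contains $\pi^X_M(*)$ while omitting the approach from the $\mathbf 1$-side, so it cannot be the intersection of fewer than $\ka$ open sets. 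The one thing to verify carefully here is that $\mcB$, being a base, refines the asymmetric clopen neighborhoods closely enough to force that omission.
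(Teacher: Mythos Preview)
Your approach is exactly what the paper intends: it offers no proof beyond the remark that Proposition~7.6 of~\cite{fks} ``easily generalizes,'' and your argument---pass to $\Clop(X)$ via the last clause of Theorem~\ref{ogequiv}, then exhibit a single $M$ with $M\cap\lm=\delta\in\lm$, $\card\delta\geq\ka$, at which $\Clop(X)\cap M\not\leq_\ka\Clop(X)$---is precisely that generalization. Your computation of the coinitiality of $\Clop(X)\cap M\cap\upset q$ is correct; the claim that the supports of any coinitial family are cofinal in $([\delta]^{<\om},\subseteq)$ follows because any $c\leq c_s$ with $c\supseteq\{x:x(0)=0\}$ and $\mathbf 1\in c$ forces $\{x:x\restrict\supp(c)=\vec 1\}\subseteq\{x:x\restrict s=\vec 1\}$, hence $s\subseteq\supp(c)$.

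There is one genuine gap in your singular-cardinal sketch, though it reflects an imprecision in the paper's statement rather than a fixable hole in your method. When $\lm$ is singular, the ``same $M$'' you propose does not exist: since $\mcB\in M$ implies $X\in M$ implies $\lm=\weight{X}\in M$, we get $\cf(\lm)\in M$ together with a cofinal map $f\colon\cf(\lm)\to\lm$ in $M$; if $M\cap\lm=\delta<\lm$ were an ordinal then $\cf(\lm)<\delta$, hence $\cf(\lm)\subseteq M$, hence $f[\cf(\lm)]\subseteq M\cap\lm$ is cofinal in $\lm$, a contradiction. Thus for singular $\lm$ every $M$ satisfying Definition~\ref{dlkog} has $\lm\subseteq M$, making $\pi^X_M$ a homeomorphism and every functionally-open base trivially a \lkb. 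So the result, read literally, fails for singular $\lm$; the paper's machinery (Theorems~\ref{equiv} and~\ref{ogequiv}) is built for regular $\lm>\ka$, and your proof is complete and correct in that regime.
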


Next we consider some consequences for cardinal functions
in topology.

\begin{definition}\label{cardf}\
\begin{itemize}
\item A base of a compactum is a family of nonempty open sets $\mcB$ 
such that for every closed subset $K$ and neighborhood $U$ of $K$,
$K\subseteq\bigcup\sigma\subseteq U$ for some finite $\sigma\subseteq\mcB$.
(This agrees with the usual definition of a topological base.)
\item A base of a boolean algebra $B$ is a subset $J$ such that
every element of $B$ is a finite join of elements from $J$.
\item The weight $\weight{X}$ of a space or boolean algebra $X$ 
is the least $\ka\geq\om$ for which $X$ has a base of size $\leq\ka$.
(Note that $\weight{B}=\card{B}$ for all infinite boolean algebras $B$.)
\item A local $\pi$-base at $p\in X$ is a family $\mcB$ of nonempty open
sets such that every neighborhood of $p$ includes some element of $\mcB$
as a subset.
\item A local $\pi$-base at an ultrafilter $U$ of a boolean algebra $B$
is a subset $S$ of $B\setminus\{0\}$ such that every element of $U$
is above some element of $S$.
\item The $\pi$-character $\pi\character{p,X}$ 
of a point of a space or an ultrafilter of a boolean algebra
is the least $\ka\geq\om$ for which it has a local $\pi$-base of size $\leq\ka$.
\item The $\pi$-character of a space (boolean algebra) is
the supremum of the $\pi$-characters of its points (ultrafilters).
\item A caliber of space $X$ is a regular cardinal $\nu$ for
which every sequence of length $\nu$ of nonempty open subsets of $X$
has a subsequence of length $\nu$ such that the open sets from
the subsequence have a common point.
\item A precaliber of a boolean algebra is a regular cardinal $\nu$
for which every sequence of length $\nu$ of nonzero elements of $X$
has a subsequence of length $\nu$ such that the range of
the subsequence can be extended to a filter.
\end{itemize}
\end{definition}

\scepin~\cite{sc}\ proved that if a compactum $Y$ is a continuous image
of an openly generated compacta $X$, then
the weight of $Y$ equals its $\pi$-character 
and every regular uncountable cardinal is a caliber of $Y$.  
The Stone dual of these results is that
every subalgebra of an openly generated boolean algebra
has weight equal to its $\pi$-character and 
has all regular uncountable cardinals as precalibers.

Let us generalize \scepin's results to \laog\ 
compacta.

\begin{theorem}\label{loog}
Let $\lm$ be a regular infinite cardinal and let $Y$ be a
$(\lm,\alo)$\nbd-adic compactum.  The weight of $Y$ then
equals its $\pi$-character or is less than $\lm$, 
and $Y$ has every uncountable regular cardinal 
$\geq\lm$ as a caliber.
\end{theorem}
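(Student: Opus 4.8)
\emph{Setup.} Since $Y$ is $(\lm,\alo)$-adic, fix a continuous surjection $\varphi\colon X\to Y$ with $X$ a $(\lm,\alo)$-openly generated compactum, and fix a $(\lm,\alo)$-base $\mcB$ of $X$. By the remark following Definition~\ref{dlkog} (equivalently, item~\eqref{bigmbase} of Theorem~\ref{ogequiv} with $\ka=\alo$), the quotient map $\pi^X_M$ is open for every $M\elemsub(\bigh,\in,\mcT_X)$ with $M\cap\lm\in\lm+1$. My plan is to treat the two assertions by separate reflection arguments carried out upstairs in $X$, both exploiting the finite-support decomposition of a long $\lm$-approximation sequence $(M_\xi)_{\xi<\eta}$ (Definition~\ref{deflong}): by Lemma~\ref{longapp}, for each index $\alpha$ the set $\{\beta:\beta\trileq_\lm\alpha\}$ is finite and produces finitely many ``independent chunks'' $N_0,\ldots,N_{m-1}$, each an elementary submodel over which $\pi^X_{N_i}$ is open. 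This is exactly the decomposition already used in the proof of $\eqref{bigmodelcard}\Rightarrow\eqref{fnmap}$ of Theorem~\ref{equiv}.

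\emph{Caliber.} First, calibers transfer along continuous surjections of compacta: given nonempty open $(V_\alpha)_{\alpha<\nu}$ in $Y$, the preimages $\varphi^{-1}[V_\alpha]$ are nonempty open in $X$, and if $x\in\bigcap_{\alpha\in S}\varphi^{-1}[V_\alpha]$ then $\varphi(x)\in\bigcap_{\alpha\in S}V_\alpha$. So it suffices to prove that each regular uncountable $\nu\ge\lm$ is a caliber of $X$. Given nonempty open $(U_\alpha)_{\alpha<\nu}$, shrink each into $\mcB$ and assign to each $U_\alpha$, via a long $\lm$-approximation sequence, its finite $\trileq_\lm$-support. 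Because $\nu$ is regular and uncountable and the supports are finite, the $\Delta$-system lemma yields $S\in[\nu]^{\nu}$ whose supports form a $\Delta$-system with root $\rho$. The root determines an open quotient of weight $<\lm\le\nu$, so the basic open sets the $U_\alpha$ ($\alpha\in S$) contain on the root coordinates come from a family of size $<\nu$; by regularity of $\nu$, $\nu$-many of the $U_\alpha$ agree on the root. Since their non-root supports are pairwise disjoint and the chunks are mutually independent (the openness of each $\pi^X_{N_i}$ makes the quotients over disjoint chunks behave like independent coordinates), a single point can meet all of these $\nu$-many $U_\alpha$ at once, which is the desired common point.

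\emph{Weight versus $\pi$-character.} The inequality $\pi\character{y,Y}\le\weight{Y}$, hence $\sup_{y}\pi\character{y,Y}\le\weight{Y}$, is automatic, as any base restricts to a local $\pi$-base at each point. Assume therefore $\weight{Y}=\mu\ge\lm$ (otherwise the disjunct $\weight{Y}<\lm$ holds and there is nothing to prove); the goal is a single $y\in Y$ with $\pi\character{y,Y}\ge\mu$. Fixing a base of $Y$ of size $\mu$, I pull it back to $X$ and run a reflection along a long $\lm$-approximation sequence $(M_\xi)_{\xi<\mu}$. Because $\weight{Y}=\mu$, cofinally many stages contribute genuinely new functionally open sets, and the finite-chunk independence lets me select a point $x\in X$ at which $\mu$-many of these pulled-back sets are independent. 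Setting $y=\varphi(x)$, the $\varphi$-images of these sets witness that no local $\pi$-base at $y$ can have size below $\mu$, so $\pi\character{y,Y}\ge\mu=\weight{Y}$, completing the equality.

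\emph{Main obstacle.} In both parts the delicate point is upgrading a reflection statement about a small open quotient into a genuine conclusion about $X$ (and then $Y$): a naive reflection only shows that $\nu$-many of the $U_\alpha$ meet a \emph{common fiber} of $\pi^X_M$, which is strictly weaker than a common point, and likewise it only bounds the $\pi$-character from above. Both gaps are closed by the finite-support machinery: the $\trileq_\lm$-decomposition of Lemma~\ref{longapp} presents $X$ locally as an independent amalgam over finitely many chunks carrying open projections, and it is precisely the disjointness of the non-root chunks, after the $\Delta$-system reduction, that converts ``meets a common fiber'' into ``contains a common point'' for the caliber, and a mere coordinate count into a true lower bound of $\mu$ for the $\pi$-character. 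Verifying this independence intrinsically in the elementary-submodel setting, rather than in a literal product, is the technical heart of the proof.
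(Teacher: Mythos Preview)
Your proposal contains a genuine gap in both parts, centered on the unjustified ``independence of chunks'' claim.

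For the caliber argument, you assert that ``the openness of each $\pi^X_{N_i}$ makes the quotients over disjoint chunks behave like independent coordinates,'' and conclude that once $\nu$-many $U_\alpha$ agree on the root and have pairwise disjoint non-root supports, they share a common point. But $X$ is not a product, and the blocks $N_0,\ldots,N_{m-1}$ of Lemma~\ref{longapp} are \emph{consecutive} pieces of an elementary chain, not independent coordinates; in the proof of $\eqref{bigmodelcard}\Rightarrow\eqref{fnmap}$ of Theorem~\ref{equiv} they are used only to extend an FN map stage by stage. Openness of $\pi^X_{N_i}$ tells you that images of open sets are open, but it does not let you amalgamate points across different $N_i$'s the way a product does. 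As you yourself note, naive reflection only yields a common fiber, and the asserted upgrade to a common point is precisely what is unproved. Moreover, your assignment of a ``finite $\trileq_\lm$-support'' to a basic open set of a general compactum is not defined; basic open sets have no intrinsic support in an approximation sequence.

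For the $\pi$-character argument, the sketch is too thin to carry the conclusion: ``$\mu$-many pulled-back sets are independent at $x$'' is not a recognizable condition, and no mechanism is given for turning it into $\pi\character{y,Y}\ge\mu$.

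The paper's proof is quite different in structure. For calibers it proceeds by induction on $\weight{X}$, building a continuous elementary chain $(M_i:i<\weight{X})$ with each $M_i\cap\lm\in\lm+1$, thereby realizing $X$ as an inverse limit of the $X/M_i$ with \emph{open} bonding maps, and then invoking \scepin's theorem that such inverse limits preserve calibers. For $\pi$-character it proves something stronger: for each regular $\tau\ge\lm$, the set of $y\in Y$ with $\pi\character{y,Y}<\tau$ has weight $<\tau$. This uses two elementary submodels $N=\bigcup_{i<\tau}N_i$ and $M$ with $\vec N\in M$, constructs closed sets $F_y\subseteq\inv{f}\{y\}$ stable under $\pi_M$ and $\pi_N$, applies another \scepin\ lemma on inverse limits to find a small $Q$ with $\closure{\bigcup_{y\in Q}F_y}$ dense, and factors $f\restrict F$ through $\pi_M$. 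Neither part uses long $\lm$-approximation sequences or $\Delta$-systems.
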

\begin{remark}
The Stone dual of the theorem is that 
$\weight{A}=\pi\character{A}$ or $\weight{A}<\lm$, 
and all regular uncountable cardinals $\geq\lm$ are precalibers of $A$,
provided $\lm$ is regular and $A$ is a subalgebra of 
some $B$ with the $(\lm,\alo)$-FN.
\end{remark}
\begin{proof}

We modestly generalize \scepin's proofs of the 
theorem's claims for the case $\lm=\al_1$. 
(For calibers, see Theorem 6 of~\cite{sc}; 
for $\pi$-character, see Sections 7 and 8 of the same.)
\scepin's proofs used inverse limits and 
so-called lattices of quotient maps,
not elementary quotients.
Our proof is primarily a work of loose translation
that made generalization to $\lm>\al_1$ easy. 
The two key ideas behind this translation are that
elementary chains induce very nice inverse limits,
and that the set of all elementary quotient maps 
$\pi^X_M$ of a compactum $X$ is a very nice lattice.

Let $f\colon X\rightarrow Y$ be a continuous surjection
from a \laog\ compactum.
First, we prove that every regular $\nu\geq\lm$ is a caliber. 
Since continuous images preserve calibers,
it suffices to prove that $\nu$ is a caliber of $X$.
Proceed by induction of the weight of $X$, 
with the base cases $\weight{X}<\nu$ being trivial.  
Let $(M_i:i<\weight{X})$ be a continuous elementary chain
such that $\card{M_i}<\weight{X}$, $M_i\in M_{i+1}$,
$M_i\elemsub(\bigh,\in,\mcT_X)$, and 
$\card{M_i}\cap\lm\subseteq M_i\cap\lm\in\lm+1$.
This makes $X$ an inverse limit of $(X/M_i:i<\weight{X})$,
with it not being hard to check using elementarity that
the bonding maps $\pi^i_j(\pi^X_{M_i}(x))=(\pi^X_{M_j}(x))$
are well-defined, continuous, and open for $j<i$,
as $M_j\in M_i$.  \scepin\ proved in~\cite{sc} that
a continuous linear inverse limit of compacta
with open bonding maps preserves calibers
(Section 5, Proposition 1), so $\nu$
is a caliber of $X$.

Next, we prove that $\pi\character{Y}=\weight{Y}$.
Assume that $\weight{Y}\geq\lm$ and $\tau$
is a regular uncountable cardinal $\geq\lm$.
We will show that the set $P$ of points $y\in Y$
with $\pi$-character less than $\tau$ 
has weight less than $\tau$.  
This will more than suffice to complete the proof.

Let $(N_i:i<\tau)$ be a continuous elementary chain
such that $\card{N_i}<\tau$, $N_i\in N_{i+1}$,
$N_i\elemsub(\bigh,\in,\mcT_X,\mcT_Y,f,\tau,\sqsubseteq)$
where $\sqsubseteq$ well-orders $\bigh$, and 
$\card{N_i}\cap\lm\subseteq N_i\cap\lm\in\lm+1$.
Let $N=\bigcup_{i<\tau}N_i$.
Choose $M\elemsub(\bigh,\in,\mcT_X,\mcT_Y,f,\vec{N},\sqsubseteq)$
such that $\card{M}\subseteq\tau\cap M\in\tau$ and 
$\card{M}\cap\lm\subseteq M\cap\lm\in\lm+1$.
For each $y\in P$, let $(U_i:i<\pi\character{y,Y})$
be the $\sqsubseteq$-least enumeration of length $\pi\character{y,Y}$
of a local $\pi$-base at $y$ consisting of functionally open sets.
Let $F_y=\bigcap_V\closure{\bigcup_{U_i\subseteq V}\inv{f}U_i}$
where $V$ ranges over all neighborhoods of $y$.
It follows that $F_y$ is a nonempty closed subset of $\inv{f}\{y\}$.

Moreover, for each $y\in P\cap M$, we have $\vec{U}\subseteq M$, 
so $\inv{\pi_M}[\pi_M[\inv{f}[U_i]]]=\inv{f}[U_i]$ 
for all $i$, where $\pi_M=\pi^X_M$.  
Hence, $\inv{\pi_M}[\pi_M[\bigcup_{U_i\subseteq V}\inv{f}[U_i]]]=\bigcup_{U_i\subseteq V}\inv{f}[U_i]$
for all $V$.
Since $\pi_M$ is open,
$\inv{\pi_M}[\pi_M[\closure{\bigcup_{U_i\subseteq V}\inv{f}U_i}]]=\closure{\bigcup_{U_i\subseteq V}\inv{f}U_i}$
for all $V$.
Hence, $\inv{\pi_M}[\pi_M[F_y]]=F_y$.
Since $F_y$ is compact,
$F_y$ has a neighborhood base 
consisting of sets of the form $\inv{\pi_M}[W]$
where $W$ is open.
Since $\pi_M$ is open, we can choose each $W$ to be of
the form $\pi_M[Z]$ where $Z$ is open and $Z\in M$.
Hence, $F_y$ has a neighborhood base that is a subset of $M$.
Hence, $F_y$ has a neighborhood base of size less than $\tau$.
By elementarity, $F_y$ has a neighborhood base of size less than $\tau$
for all $y\in P$.

The space 
$X/N$ is an inverse limit of $(X/N_i:i<\tau)$
with open bonding maps $\pi^i_j(\pi^X_{N_i}(x))=\pi^X_{N_j}(x)$.
\scepin\ proved in~\cite{sc} that
a continuous inverse limit $L$ of length $\tau$
of compacta with open bonding maps is such that
if $\mcA$ is any collection of subsets of $L$
each with neighborhood bases of size less than
$\tau$, then there exists $\mcB\in[\mcA]^{<\tau}$
such that $\closure{\bigcup\mcB}=\closure{\bigcup\mcA}$
(Section 5, Lemma 7).
Therefore, there exists $Q\in[P\cap N]^{<\tau}$ such that 
$\closure{\bigcup_{y\in Q} \pi_N[F_y]}=\closure{\bigcup_{y\in P\cap N} \pi_N[F_y]}$
where $\pi_N=\pi^X_N$.
Since $\tau$ is regular, we can choose $Q$ to be $P\cap N_i$ for some $i<\tau$.
Choose $Q=P\cap N_i$ for the least possible $i$.
Now $Q\in N$, so, setting $F=\closure{\bigcup_{y\in P} F_y}$,
we have $F\in N$ also.
If $W\in N$ is functionally open and intersects $F$,
then $W$ intersects $F_y$ for some $y\in P$; by elementarity, 
we may choose $y\in N$, so $W$ intersects $\bigcup_{y\in P\cap N}F_y$,
which implies that $W$ intersects $\bigcup_{y\in Q}F_y\in N$
because $\inv{\pi_N}[\pi_N[W]]=W$ and,
reusing an argument from the previous paragraph,
$\inv{\pi_N}[\pi_N[F_y]]=F_y$ for all $y\in Q$.
Invoking elementarity once more, \emph{every} functionally open $W$ 
that intersects $F$ intersects $\bigcup_{y\in Q}F_y$, so $F=\closure{\bigcup_{y\in Q}F_y}$.

Since $Q$ is definable from $\vec{N}$, we have $Q\in M$.
Since $\card{Q}<\tau$, we also have $Q\subseteq P\cap M$; 
hence, $\inv{\pi_M}[\pi_M[F_y]]=F_y$ for all $y\in Q$,
which in turn implies that
$\inv{\pi_M}[\pi_M[\closure{\bigcup_{y\in Q} F_y}]]
=\closure{\bigcup_{y\in Q} F_y}$, 
thanks to the openness of $\pi_M$.
Therefore, $\inv{\pi_M}[\pi_M[F]]=F$.
Let us show that $g(\pi_M(x))=f(x)$ defines a 
continuous surjection from $\pi_M[F]$ to $\closure{P}$.  
This will complete the proof because it implies that
$$\weight{P}\leq\weight{\closure{P}}
\leq\weight{\pi_M[F]}\leq\weight{X/M}\leq\card{M}<\tau,$$ 
for $P$ would be a subspace of the continuous image $\closure{P}$ 
of the compact subspace $\pi_M[F]$ of $X/M$.

First, observe that $f$ maps $\bigcup_{y\in P}F_y$ onto $P$,
so $f$ maps $F$ onto $\closure{P}$,
so $g$ maps $\pi_M[F]$ onto $\closure{P}$,
assuming $g$ is well-defined.

Second, we show that $g$ is well-defined.
Assume that $a,b\in F$ and $f(a)\not=f(b)$.
We need to show that $\pi_M(a)\not=\pi_M(b)$.
Choose disjoint regular closed $Y$-neighborhoods 
$A$ and $B$ of $f(a)$ and $f(b)$, respectively.
Because $\bigcup_{y\in Q}F_y$ is dense in $F$, 
$\inv{f}[A]=\closure{\bigcup_{y\in A\cap Q}F_y}$ and 
$\inv{f}[B]=\closure{\bigcup_{y\in B\cap Q}F_y}$.
Since $\pi_M$ is open and $\inv{\pi_M}[\pi_M[F_y]]=F_y$ 
for all $y\in Q$, we have 
$\inv{\pi_M}[\pi_M[\inv{f}[A]]]=\inv{f}[A]$ and 
$\inv{\pi_M}[\pi_M[\inv{f}[B]]]=\inv{f}[B]$.
Therefore, $\pi_M[\inv{f}[A]]$ and $\pi_M[\inv{f}[B]]$ 
are disjoint. Hence, $\pi_M(a)\not=\pi_M(b)$ as desired.

Finally, we show that $g$ is continuous.
Given any open subset $V$ of $X$,
$$\inv{g}[V\cap\closure{P}]=\pi_M[\inv{f}[V\cap \closure{P}]]
=\pi_M[\inv{f}[V\cap f[F]]]=\pi_M[\inv{f}[V]\cap F].$$
Since $\inv{\pi_M}[\pi_M[F]]=F$, we have $\pi_M[\inv{f}[V]\cap F]=\pi_M[\inv{f}[V]]\cap \pi_M[F]$.
Since $f$ is continuous and $\pi_M$ is open, $\pi_M[\inv{f}[V]]$ is open in $X/M$.
Therefore, $\inv{g}[V\cap\closure{P}]$ equals $\pi_M[F]$ intersected with an open subset of $X/M$.
Thus, $g$ is continuous as desired.
\end{proof}

In contrast, the double-arrow space has weight $2^{\alo}$, 
yet all its points have countable $\pi$-character.
This is despite the fact that it is $(\al_2,\al_1)$-openly generated 
because its clopen algebra,
being isomorphic to the real interval algebra, has the WFN.
Therefore, at least in models of $\neg CH$, 
Theorem~\ref{loog} does not generalize 
from $(\lm,\alo)$\nbd-adic compacta to
$(\lambda,\al_1)$-adic compacta. (We naturally
wonder whether there is a ZFC example of this phenomenon.)

\begin{question}
We conclude by drawing attention to the very difficult problem
of ``large'' homogeneous compacta. (See \cite{mill} for a survey.) 
All known homogeneous compacta have $\left(2^{\alo}\right)^+$ as a 
caliber, but over forty years has not answered the question of 
whether this barrier is a theorem (or even a consistency result). 
Indeed, all known homogeneous compacta are continuous images 
of products where each factor is a compactum 
with weight at most $2^{\alo}$, so they are all
$\left(\left(2^{\alo}\right)^+,\alo\right)$-adic,
and, as far the author knows, might all be
$\left(\left(2^{\alo}\right)^+,\alo\right)$-openly generated.
Can a homogeneous compactum fail to be
$\left(\left(2^{\alo}\right)^+,\alo\right)$-openly generated?
Can a homogeneous compactum fail to be
$\left(\left(2^{\alo}\right)^+,\alo\right)$-adic?
\end{question}


\begin{thebibliography}{10}

\bibitem{b} I. Bandlow, \emph{A construction in set-theoretic topology by means of elementary substructures},
Z. Math. Logik Grundlag. Math. {\bf 37} (1991), no. 5, 467–480.

\bibitem{bank} P. Bankston, \emph{Clopen sets in hyperspaces}, Proceedings of the AMS {\bf 54} (1976), 298--302.

\bibitem{e} R. Engelking, \emph{Cartesian products and dyadic spaces}, Fund. Math. {\bf 57} (1965), 287--304.

\bibitem{fks} S. Fuchino, S. Koppelberg, and S. Shelah, \emph{Partial orderings with the weak Freese-Nation property}, Ann. Pure Appl. Logic {\bf 80} (1996), no. 1, 35--54. 

\bibitem{hs} L. Heindorf and L. B. Shapiro, \emph{Nearly Projective Boolean Algebras}, with an appendix by S. Fuchino, Lecture Notes in Mathematics {\bf 1596}, Springer-Verlag, Berlin, 1994. 

\bibitem{mill} J. van Mill, {\it Homogeneous compacta}, Open Problems in Topology II, (E. Pearl, ed.), Elsevier, Amsterdam, 2007, pp. 189--195.

\bibitem{m} D. Milovich, \emph{Noetherian types of homogeneous compacta and dyadic compacta}, Topology Appl. \textbf{156} (2008), 443--464.

\bibitem{sa} L. B. \v Sapiro, \emph{The space of closed subsets of $D\sp{\aleph }\sb{2}$ is not a dyadic bicompactum}, Soviet Math. Dokl. {\bf 17} (1976), no. 3, 937--941. 

\bibitem{sc} E. V. \v S\v cepin, \emph{On $\kappa$\nbd-metrizable spaces}, Math. USSR-Izv. {\bf 14} (1980), no. 2, 406--440.

\bibitem{sc2} E. V. Shchepin, \emph{Functors and uncountable powers of compacta}, Russian Math. Surveys, {\bf 36} (1981), no. 3, 1--71.



\end{thebibliography}
\end{document}